\definecolor{linkpurple}{RGB}{136,92,150}
\definecolor{cellgray}{gray}{0.875}
\newtheoremstyle{plain}
  {5pt plus3pt minus3pt}   
  {5pt plus3.3pt minus3.3pt}   
  {\itshape}  
  {0pt}       
  {\bfseries} 
  {.}         
  {5pt plus 1pt minus 1pt} 
  {}          
\newtheorem{theorem}{Theorem}[section]
\newtheorem{lemma}[theorem]{Lemma}
\newtheorem{proposition}[theorem]{Proposition}
\newtheorem{question}[theorem]{Question}
\newtheorem*{question*}{Question} 
\newtheorem*{wall*}{Wall's Stabilization Problem}
\newtheorem{mainthm}{Theorem}[section]
\theoremstyle{definition}
\newtheoremstyle{example}
  {2pt plus3.3pt minus3.6pt}   
  {2pt plus3.3pt minus3.3pt}   
  {}  
  {0pt}       
  {\bfseries} 
  {.}         
  {5pt plus 1pt minus 1pt} 
  {}          
\theoremstyle{example}
\newtheorem{example}[theorem]{Example}
\theoremstyle{definition}
\newtheoremstyle{rmk}
  {0pt plus3.6pt minus6.6pt}   
  {1pt plus3.3pt minus3.3pt}   
  {}  
  {0pt}       
  {\bfseries} 
  {.}         
  {5pt plus 1pt minus 1pt} 
  {}          
\theoremstyle{rmk}
\newtheorem{remark}[theorem]{Remark}
\newcommand{\leftrarrows}{\mathrel{\raise.75ex\hbox{\oalign{%
  $\scriptstyle\leftarrow$\cr
  \vrule width0pt height.5ex$\hfil\scriptstyle\relbar$\cr}}}}
\newcommand{\lrightarrows}{\mathrel{\raise.75ex\hbox{\oalign{%
  $\scriptstyle\relbar$\hfil\cr
  $\scriptstyle\vrule width0pt height.5ex\smash\rightarrow$\cr}}}}
\newcommand{\Rrelbar}{\mathrel{\raise.75ex\hbox{\oalign{%
  $\scriptstyle\relbar$\cr
  \vrule width0pt height.5ex$\scriptstyle\relbar$}}}}
\def\leftrightarrowsfill@{\arrowfill@\leftrarrows\Rrelbar\lrightarrows}
\newcommand{\xleftrightarrows}[2][]{\ext@arrow 3399\leftrightarrowsfill@{#1}{#2}}
\definecolor{MutedBlue}{RGB}{40,90,160}
\definecolor{violet}{rgb}{.6,0,.6}
\definecolor{green}{rgb}{.0,.8,0}
\definecolor{darkred}{rgb}{.7,0,0}
\definecolor{darkyellow}{rgb}{.4,.4,0}
\definecolor{orangenew}{rgb}{.69,.37,.25}
\definecolor{MutedRed}{RGB}{200,55,55}
\newcommand{\Z}{\mathbb{Z}}
\newcommand{\kk}{\mathbb{F}}
\newcommand{\F}{\mathbb{F}}
\let\int\relax
\newcommand{\int}{\mathring}
\DeclareMathOperator{\ckhr}{\widetilde{CKh}}
\DeclareMathOperator{\khr}{\widetilde{Kh}}
\DeclareMathOperator{\bnr}{\widetilde{BN}}
\DeclareMathOperator{\cbnr}{\widetilde{CBN}}
\DeclareMathOperator{\hfk}{\mathit{HFK}}
\newcommand{\sxs}{S^2 \! \times \! S^2}
\newcommand{\smallsum}{\raisebox{1pt}{\text{\footnotesize$\#$}}}
\patchcmd{\@maketitle}{\LARGE \@title}{\fontsize{16}{19.2}\selectfont\@title}{}{}
\author[K.\ Hayden]{Kyle Hayden}
\address{Rutgers University, Newark, NJ 07102}
\email{kyle.hayden@rutgers.edu}
\title[An atomic approach to stabilization problems]{\large An atomic approach to Wall-type stabilization problems}
\begin{document}
\vspace*{-.375in}

\bigskip

\begingroup
\def\uppercasenonmath#1{}
\let\MakeUppercase\relax
\maketitle
\endgroup
\thispagestyle{empty}

\vspace{-.25in}

\begin{center}\small
\textsc{Kyle Hayden}
\end{center}


\bigskip

\begin{center} \begin{minipage}{.88\linewidth}\footnotesize

\textsc{Abstract.} Wall-type stabilization problems investigate the collapse of exotic 4-dimensional phenomena under stabilization operations (e.g., taking connected sums with $\sxs$). We propose an elementary approach to these problems, providing a construction of  exotic \linebreak {4-manifolds}  and knotted  surfaces  that are candidates to remain exotic after stabilization --- including examples in the setting of closed, simply connected 4-manifolds.   As a proof of concept, we show this construction yields exotic surfaces in $B^4$ that remain exotic after  (internal) stabilization, detected by the cobordism maps on universal Khovanov homology.
 
\hspace{1em} We also compare these  Khovanov-theoretic obstructions for  surfaces to the Floer-theoretic counterparts for    exotic 4-manifolds obtained as their branched covers, suggesting a bridge via  Lin's spectral sequence from Bar-Natan homology to involutive monopole Floer homology.

\end{minipage}
\end{center}

\bigskip

\section{Introduction}\label{sec:intro}

\smallskip

In \cite{wall:4-manifolds}, Wall proved that any pair of 
smooth, closed, simply-connected 4-manifolds that are homotopy equivalent   are  $h$-cobordant and become diffeomorphic after taking connected sums with $k(S^2 \! \times \! S^2)$ for some $k \geq0$. The  number of stabilizations required, $k$, conveys a measure of complexity for the $h$-cobordism and thus a notion  of distance between  smooth structures.  Donaldson's disproof of the smooth $h$-cobordism conjecture produced exotic pairs of closed, simply-connected 4-manifolds  \cite{donaldson:h-cobordism}, implying one must allow $k >0$. In turn, understanding the possible values of $k$ helps quantify the breakdown of the $h$-cobordism theorem in dimension four. 

\vspace{-4pt}

\begin{wall*}
For $k \geq 1$, does there exist an exotic pair of closed, simply-connected 4-manifolds that remain distinct after connected sum with $k(S^2 \!\times \!S^2)$?
\end{wall*}

\vspace{-4pt}

 Subsequent work has showed that Wall's principle --- the  instability of exotic phenomena  under connected sum with $\smash{S^2\! \times \! S^2}$  
--- holds in a variety of settings, including for all compact, orientable smooth 4-manifolds \cite{gompf:stable} and  exotically knotted surfaces and self-diffeomorphisms \cite{quinn:isotopy,perron2}. For knotted surfaces, Baykur-Sunukjian \cite{baykur-sunukjian:stab} also established a Wall-type result for the  related notion of \emph{internal stabilization}, which consists of adding an embedded (and possibly knotted) handle to increase the genus of a knotted surface.  Recently, there has been a burst of progress demonstrating that one stabilization is \emph{not} enough in many contexts \cite{lin:twist,lin-mukherjee,konno-mukherjee-taniguchi,guth,kang}, as well as results identifying  contexts in which a single stabilization suffices   \cite{akmrs:one-is-enough,auckly-sadykov,ruberman-strle}. 

This paper proposes a strategy for building exotic pairs of 4-manifolds and knotted surfaces that are candidates to remain distinct after stabilization,  including   closed, simply-connected  4-manifolds (Theorem~\ref{thm:embed}). Our approach is guided by certain building blocks that  should arise in $h$-cobordisms with sufficiently complicated sets of critical points.   As a proof of concept, we  produce exotically knotted  surfaces  in $B^4$ that remain distinct after one  internal stabilization, detected using the universal version of Khovanov homology.

\vspace{-4pt}

\begin{mainthm}\label{thm:stab}
For all integers $g \geq 1$, there are 
exotically knotted pairs of properly embedded, genus-$g$ surfaces in $B^4$ that remain exotic after one  internal stabilization, where the stabilized surfaces induce distinct maps on universal Khovanov homology.
\end{mainthm}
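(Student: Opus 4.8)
The plan is to build the pair from the paper's atomic local model, verify that the two surfaces are topologically isotopic using a fundamental-group/Freedman argument on their complements, and then obstruct smooth isotopy \emph{after one internal stabilization} using the cobordism maps on universal Khovanov homology, localizing the computation to the atomic piece and tracking its behavior under the handle map.

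\emph{Construction and topological isotopy.} For each $g \geq 1$, I would produce genus-$g$ surfaces $\Sigma_0, \Sigma_1 \subset B^4$ with common boundary $L \subset S^3$ by inserting the two variants of the atomic building block underlying Theorem~\ref{thm:embed} into a fixed standard surface along a distinguished ball-pair, attaching $g$ standard handles (or using a suitable $g$-fold iterate of the atom) to reach genus $g$. By construction the surfaces agree outside a single ball and $\partial\Sigma_0 = \partial\Sigma_1 = L$. Since the atom is topologically standard by design --- the two insertions differ by a homeomorphism of the ball-pair that is topologically, but not smoothly, isotopic to the identity --- one gets a topological ambient isotopy $\Sigma_0 \simeq \Sigma_1$ rel $\partial$ directly; alternatively, the complements $B^4 \setminus \nu\Sigma_i$ have the same (``good'') fundamental group, hence are homeomorphic rel boundary by Freedman--Quinn, and one then upgrades homeomorphic complements to isotopic surfaces via the appropriate topological surface-uniqueness theorem. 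As internal stabilization by a standard handle is, up to topological isotopy, independent of choices, the stabilized surfaces $\Sigma_0', \Sigma_1'$ remain topologically isotopic rel $\partial$.

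\emph{Khovanov obstruction after stabilization.} Work over the universal Frobenius extension $A = R[X]/(X^2 - hX - t)$ with $R = \mathbb{Z}[h,t]$. By functoriality of universal Khovanov homology, with the sign ambiguity resolved in the sense of Morrison--Walker--Wedrich, each $\Sigma_i$ --- viewed as a cobordism $\emptyset \to L$ in $B^4$ --- determines a class $v_{\Sigma_i} \in \kh(L)$, and smoothly isotopic surfaces give equal classes. Internal stabilization by a standard, untwisted handle \cite{baykur-sunukjian:stab}, isotoped to lie near $\partial\Sigma_i$, postcomposes the cobordism map with the genus-increasing handle map $m \circ \Delta$, which over $R$ is multiplication by the non-zero-divisor $2X - h \in A$; hence $v_{\Sigma_i'} = (2X-h)\cdot v_{\Sigma_i}$ via the basepoint action on $\kh(L)$. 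It therefore suffices to prove
\[
(2X - h)\bigl(v_{\Sigma_0} - v_{\Sigma_1}\bigr) \neq 0 \quad \text{in } \kh(L).
\]
Because the two surfaces differ only inside the atomic ball, the difference $\delta := v_{\Sigma_0} - v_{\Sigma_1}$ is the image of the generator under a cobordism map supported near the atom, hence is computed by a local ``twist cobordism'' calculation in the universal Khovanov TQFT --- which one can carry out on a minimal model, in the spirit of the known Khovanov-detected exotic surfaces of Hayden--Sundberg.

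\emph{The crux, and all genera.} The heart of the matter --- and the step I expect to be hardest --- is to show that $\delta$ is not merely nonzero but \textbf{robust}: that it is not annihilated by the handle element $2X - h$, and in fact survives enough powers of it to absorb the $g$ standard handles plus one further stabilization. The clean way to guarantee this is to arrange that $\delta$ has nonzero image in a Lee-type specialization of the universal theory --- on which $2X-h$ acts invertibly --- arising from a sign/orientation refinement that distinguishes the two insertions despite their equal genus, equal boundary, and equal $s$-invariant; this is precisely the feature the atomic construction is meant to supply, and it is what fails for the usual reduced-coefficient obstructions, where a single internal stabilization kills the handle map outright. Failing that, one must instead verify that the $(2X-h)$-torsion order of $\delta$ grows with the number of atoms used, which is the reason to build the genus-$g$ examples from iterated atoms rather than from one. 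Granting $(2X-h)\delta \neq 0$, functoriality gives $v_{\Sigma_0'} \neq v_{\Sigma_1'}$, so $\Sigma_0', \Sigma_1'$ are not smoothly isotopic; being topologically isotopic rel $\partial$, and since smooth isotopy of $\Sigma_0, \Sigma_1$ would force smooth isotopy of their stabilizations, the pair is exotically knotted and remains so after one internal stabilization. The remaining points are technical: confirming the complement/Freedman argument runs uniformly for all $g$, and that the stabilizing handle is ``standard'' in the precise sense of \cite{baykur-sunukjian:stab}.
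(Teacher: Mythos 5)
Your skeleton matches the paper's (atomic pair, topological isotopy, smooth obstruction via universal Khovanov homology with stabilization acting as $2X-h$), but two of your key steps would fail as proposed. First, the topological step: the atomic pieces are \emph{not} topologically standard. The paper emphasizes that the underlying disks $D,D'$ are not topologically isotopic rel boundary --- their knot groups are not infinite cyclic and the peripheral maps already distinguish them --- so there is no homeomorphism of the ball-pair that is ``topologically isotopic to the identity,'' and no direct ambient topological isotopy of the inserted surfaces. The entire reason the theorem is stated for genus $g\geq 1$ is that extra bands must be attached precisely to kill this $\pi_1$ obstruction; even then, having $\pi_1\cong\Z$ complements and a Freedman--Quinn homeomorphism of exteriors does not by itself give isotopy of the surfaces. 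The paper instead invokes Conway--Powell (Theorem~\ref{thm:cp}), whose hypothesis requires an isometry of the equivariant intersection forms on the infinite cyclic covers of the exteriors, and verifying that isometry is the main technical computation (handle diagrams for the covers, explicit change of basis, both forms reduce to $\begin{bmatrix} 0 & t-1 \\ t^{-1}-1 & 0\end{bmatrix}$). Your proposal omits this entirely.

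Second, your proposed mechanism for the crux --- arranging that $\delta$ survives to a Lee-type specialization on which $2X-h$ acts invertibly --- cannot work. If $\delta$ were nonzero in that localization, the two surfaces would induce distinct maps after \emph{arbitrarily many} internal stabilizations, contradicting the Baykur--Sunukjian theorem \cite{baykur-sunukjian:stab} that homologous surfaces of equal genus become isotopic after finitely many stabilizations (and indeed the Lee/Bar-Natan classes of a surface in the localized theory are determined by its topological type). So $\delta$ is necessarily $H$-torsion, and the real content is a lower bound on its torsion order. The paper's route is concrete: distinguish $\khr(D)\neq\khr(D')$ by tracking an explicit cycle $\phi$, lift to $\bnr$ over $\F_2[H]$, and then pin down the $\F_2[H]$-module structure of $\bnr(-K)$ in the relevant bigrading via a computer calculation of the Bar-Natan--Lee--Turner spectral sequence, showing it is $\F_2[H]\oplus\F_2[H]/H^2$ there, so \emph{every} nonzero element of that bigrading survives one multiplication by $H$ (Proposition~\ref{prop:hooked-disks}, Lemma~\ref{lem:stab}). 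Higher genus is handled not by iterating atoms but by boundary-connect-summing with fiber surfaces of $T_{2,2g-1}$ and a K\"unneth argument; your fallback of ``growing torsion with the number of atoms'' is both unsubstantiated and unnecessary for the statement, which only requires torsion order two.
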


\vspace{-4pt}

Below we sketch our constructions, obstructions, and potential connections to Floer theory.

\vspace{-.05in}

\subsection{The topological construction}\label{subsec:build} 
Inspired by the theory of corks \cite{akbulut:cork,CFHS,matveyev,kirby:cork} and Gompf's nuclei of elliptic surfaces \cite{gompf:nuclei}, we aim to identify the core 4-dimensional building blocks that contribute to the complexity of an $h$-cobordism --- then we build our 4-manifolds around these atomic pieces; see \S\ref{sec:top}.  Here the cork theorem says that if $X$ and $X'$ form an exotic pair of closed, simply-connected 4-manifolds, then there is a compact, contractible 4-manifold $C \subset X$ and an involution of $\partial C$ such that removing $\mathring{C}$ from $X$ and regluing it by the involution of $\partial C$ yields $X'$.

 \begin{wrapfigure}[36]{r}{0.205\textwidth}
\center
\vspace{-20pt}
\def\svgwidth{.95\linewidth}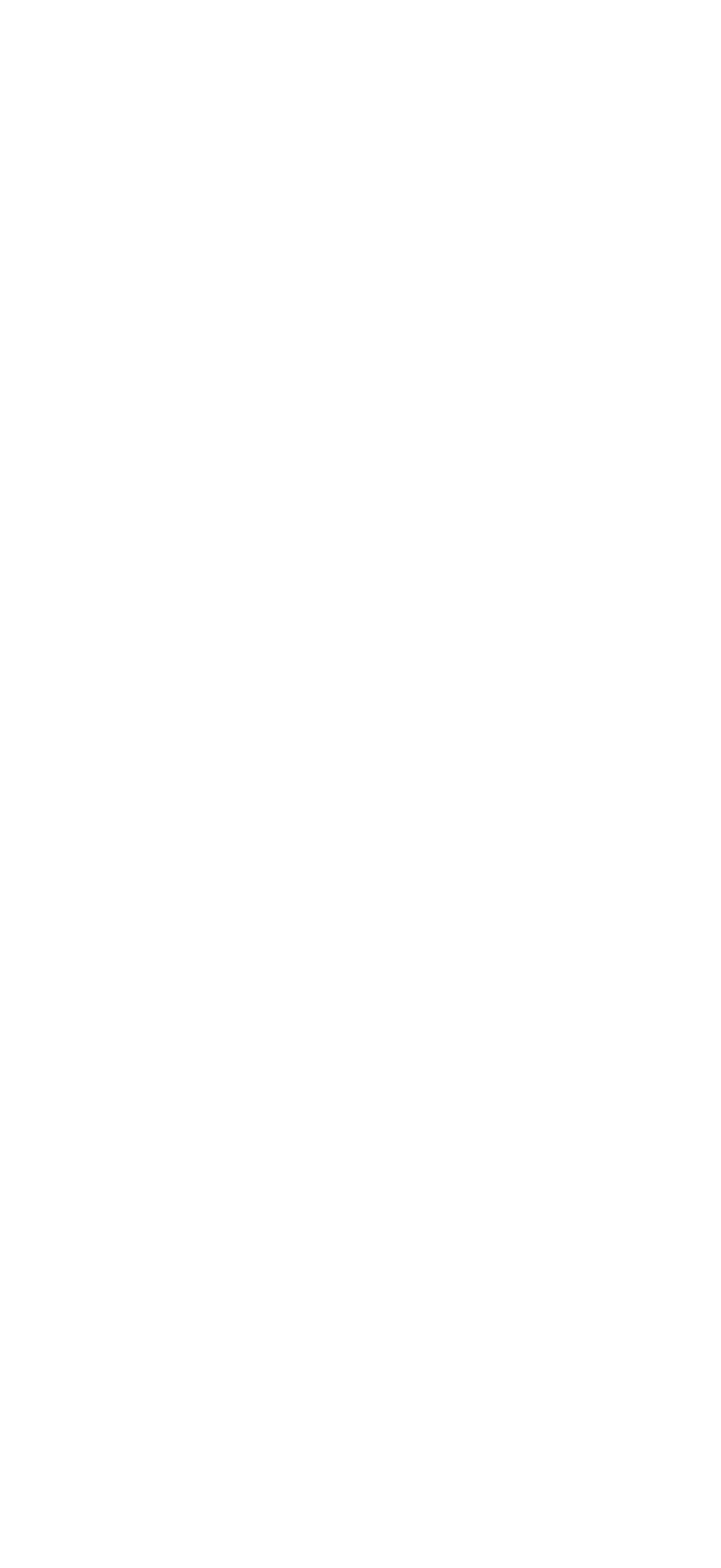\hfill

\vspace{-4pt}
\caption{}\label{fig:hooked-positron}

\vspace{10pt}

\def\svgwidth{.95\linewidth}
\begingroup%
  \makeatletter%
  \providecommand\color[2][]{%
    \errmessage{(Inkscape) Color is used for the text in Inkscape, but the package 'color.sty' is not loaded}%
    \renewcommand\color[2][]{}%
  }%
  \providecommand\transparent[1]{%
    \errmessage{(Inkscape) Transparency is used (non-zero) for the text in Inkscape, but the package 'transparent.sty' is not loaded}%
    \renewcommand\transparent[1]{}%
  }%
  \providecommand\rotatebox[2]{#2}%
  \newcommand*\fsize{\dimexpr\f@size pt\relax}%
  \newcommand*\lineheight[1]{\fontsize{\fsize}{#1\fsize}\selectfont}%
  \ifx\svgwidth\undefined%
    \setlength{\unitlength}{267.92220853bp}%
    \ifx\svgscale\undefined%
      \relax%
    \else%
      \setlength{\unitlength}{\unitlength * \real{\svgscale}}%
    \fi%
  \else%
    \setlength{\unitlength}{\svgwidth}%
  \fi%
  \global\let\svgwidth\undefined%
  \global\let\svgscale\undefined%
  \makeatother%
  \begin{picture}(1,3.66048931)%
    \lineheight{1}%
    \setlength\tabcolsep{0pt}%
    \put(0,0){\includegraphics[width=\unitlength,page=1]{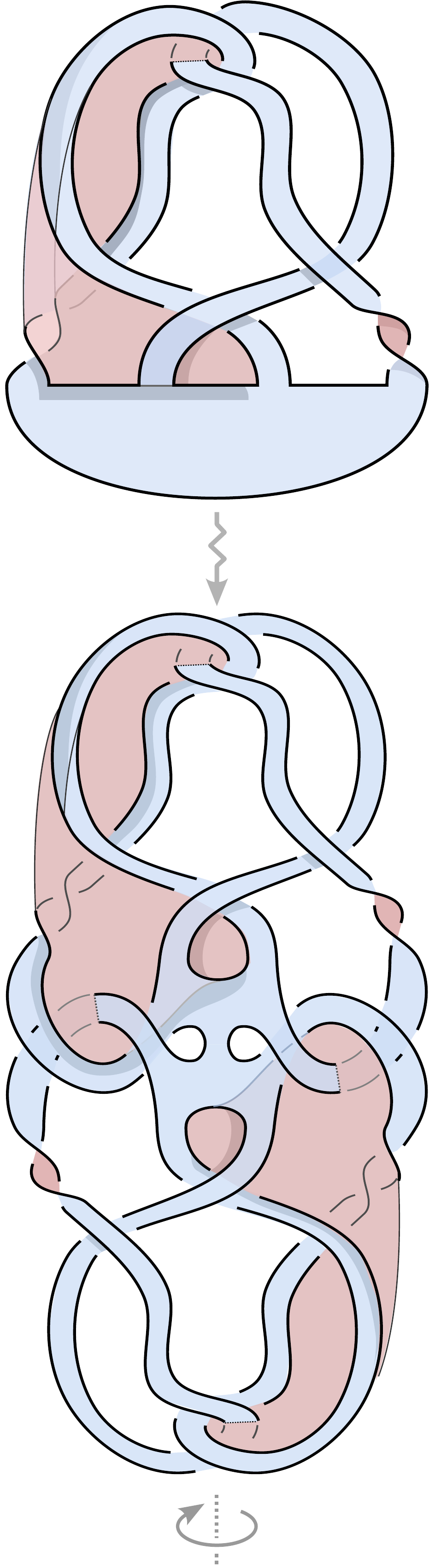}}%
    \put(0.65480355,0.06722068){\color[rgb]{0.6,0.6,0.6}\makebox(0,0)[lt]{\lineheight{1.25}\smash{\begin{tabular}[t]{l}$\tau$\end{tabular}}}}%
  \end{picture}%
\endgroup%

\vspace{-7.5pt}
\caption{}\label{fig:hooked-disks}

\end{wrapfigure}
 \

\vspace{-20pt}

This  modification along $C \subset X$ can also be achieved by a sequence of surgeries on $C$ that has the effect of exchanging 1- and 2-handles in algebraically dual pairs. The number of such pairs in $C$ gives a naive upper bound on the number of stabilizations required to dissolve the cork twist.  However, if there is insufficient geometric linking between the different pairs of handles,  a copy of $S^2 \! \times \! S^2$ can be recycled and used to effect multiple 1-/2-handle exchanges.  We attempt to prohibit this recycling  by taking multiple corks and  interlocking their various pairs of 1- and 2-handles.   (In \S\ref{sec:top}, we recast this in the language of $h$-cobordisms and configurations of  2-spheres.)

\begin{example}\label{ex:plug} The top of Figure~\ref{fig:hooked-positron} depicts  the \emph{positron cork} from \cite{akbulut-matveyev:decomp}, where the cork twisting involution acts by  180$^\circ$ rotation through a vertical axis. Below it, we build a 4-manifold $W$  from two interlocked positron corks. Although $W$ is not itself a cork (because it is not contractible, having $H_1 \cong \Z/5\Z$),  it is a \emph{plug} in the sense of \cite{akbulut-yasui:plugs} and can be used to build larger exotic 4-manifolds. \hfill $\diamond$\end{example}

By varying the linking and the  number of 1-/2-handle pairs, one can adjust the algebraic topology (e.g., achieving contractibility)  and increase the expected number of stabilizations required to dissolve the twist. Moreover, the explicit nature of the construction makes it straightforward to embed these compact 4-manifolds with boundary into closed 4-manifolds to produce candidates in the closed setting.

\begin{mainthm}\label{thm:embed}
There exists an exotic pair of closed  4-manifolds related by twisting along the 4-manifold $W$ depicted in Figure~\ref{fig:hooked-positron}. 
\end{mainthm}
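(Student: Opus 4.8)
The plan is to realize the $\widetilde\tau$-twist of Figure~\ref{fig:hooked-positron} inside a closed 4-manifold whose Seiberg--Witten invariants can be computed before and after the twist. The key preliminary point is that $W$ is a \emph{plug} in the sense of \cite{akbulut-yasui:plugs}: the boundary involution $\widetilde\tau$ extends to a self-\emph{homeomorphism} $\Phi\colon W\to W$ with $\Phi|_{\partial W}=\widetilde\tau$. I would verify this by handle calculus on Figure~\ref{fig:hooked-positron} — the positron cork twist of \cite{akbulut-matveyev:decomp} already extends topologically, and interlocking two copies creates no new topological obstruction — invoking Freedman's theorem to promote a homotopy-theoretic extension to a genuine homeomorphism. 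Granting this, for \emph{any} embedding $W\hookrightarrow X$ into a closed 4-manifold, the twisted manifold $X_W:=(X\setminus\mathring{W})\cup_{\widetilde\tau}W$ is homeomorphic to $X$, via the map that is the identity on $X\setminus\mathring{W}$ and $\Phi$ on $W$ (so $\pi_1$ is unaffected, notwithstanding $H_1(W)\cong\Z/5\Z$). Thus the theorem reduces to producing one embedding for which $X_W$ is \emph{not} diffeomorphic to $X$.

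Next I would choose $X$ and the embedding so that the $\widetilde\tau$-twist becomes a recognizable cut-and-paste operation. Using the Kirby diagram of $W$, embed it into a minimal simply-connected elliptic surface — say $E(n)$ for a suitable $n\ge 2$ — with the $1$- and $2$-handles of $W$ arranged to lie in a cusp neighborhood or Gompf nucleus \cite{gompf:nuclei}, so that $\widetilde\tau$ restricts, on the part of $\partial W$ meeting the fibration, to the gluing map of a logarithmic transformation; the multiplicity should be forced to $5$ by $H_1(W)\cong\Z/5\Z$, with the two interlocked positron corks producing the analogue of two successive log transforms. One then identifies $X_W$ with the resulting surgered surface $X'$ (a Dolgachev-type surface), in the spirit of the plug-twist/log-transform correspondences of Akbulut--Yasui \cite{akbulut-yasui:plugs}. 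Alternative packagings — realizing the twist as a Fintushel--Stern knot surgery or a rational blowdown — would serve equally well.

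Finally, with $b^+(X)>1$ and $X$ of Seiberg--Witten simple type (the reason for working inside an honest algebraic surface), I would compare $\sw(X)$ and $\sw(X')$ using the Fintushel--Stern product/log-transform formula: a logarithmic transformation of multiplicity $\ge 2$ changes the set of basic classes (or their multiplicities), so $\sw(X)\ne\sw(X_W)$ and the two are not diffeomorphic. Combined with the plug property, this produces the desired exotic pair of closed 4-manifolds related by a $W$-twist.

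The main obstacle is the handle calculus underlying the second step: one must write down an explicit embedding of $W$ — two genuinely \emph{interlocked} positron corks, with entangled $1$-/$2$-handle pairs — into a closed 4-manifold, and then verify that the abstract boundary involution $\widetilde\tau$ matches, on the nose, the gluing diffeomorphism of a log transform (or other invariant-changing operation) carried out in the complement. Establishing the plug property is a comparatively light technical point, and once the twist has been matched with a standard operation the Seiberg--Witten comparison is routine.
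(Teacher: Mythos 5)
There is a genuine gap, and it occurs at the step you describe as a ``comparatively light technical point.'' Your whole reduction rests on the claim that $\tilde\tau$ extends to a self-homeomorphism $\Phi$ of $W$, so that any twisted manifold $X_W$ is automatically homeomorphic to $X$. But $W$ is not contractible --- it has $H_1(W)\cong\Z/5\Z$ and nontrivial $\pi_1$ --- so Freedman's extension theorem (which is what lets a cork involution extend topologically) simply does not apply, and being a plug in the sense of \cite{akbulut-yasui:plugs} does not carry any such extension property with it; for plugs the boundary involution typically does \emph{not} extend even topologically. No amount of ``handle calculus plus interlocking creates no new topological obstruction'' substitutes for an actual argument here. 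The paper never extends $\tilde\tau$ over $W$ at all: it enlarges $W$ to a Stein domain $W_+$ by attaching two more 2-handles (Figure~\ref{fig:stein-embed}), arranges that $\partial W_+$ is an integer homology sphere and that $W_+$ and its twisted companion $W_+'$ both have hyperbolic intersection form $\left[\begin{smallmatrix}0&1\\1&0\end{smallmatrix}\right]$, and only then invokes Freedman (via \cite{boyer}) to extend the induced boundary diffeomorphism $f\colon\partial W_+\to\partial W_+'$ to a homeomorphism $W_+\cong W_+'$. That is how the homeomorphism of the closed pair is obtained; your version of this step is unsupported and quite possibly false as stated.

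The second half of your plan --- embedding $W$ in $E(n)$ so that the $\tilde\tau$-twist becomes a multiplicity-5 logarithmic transform (or knot surgery), and then comparing Seiberg--Witten invariants --- is exactly the part you concede is the ``main obstacle,'' and it is not carried out; there is no a priori reason the twist matches any such standard operation, and the heuristic ``multiplicity forced by $H_1(W)\cong\Z/5\Z$'' is not an argument. The paper's actual detection mechanism avoids this entirely and requires no Seiberg--Witten computation: attach one further $(-1)$-framed Stein 2-handle along the curve $\gamma\subset\partial W_+$, use Lisca--Mati\'c \cite{lisca-matic:embed} to embed the resulting Stein domain into a \emph{minimal} complex surface $Z$, and observe that after regluing by $f$ the curve $f(\gamma)$ bounds a smooth disk in $W_+'$, so the twisted manifold $Z'$ contains a smoothly embedded sphere of square $-1$, which $Z$ cannot (the same Stein/adjunction input, via \cite{lisca-matic:embed} and the strategy of \cite{akbulut-matveyev}, also shows $f$ itself does not extend smoothly). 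So the exotic pair comes from minimality versus an explicit $(-1)$-sphere, with the homeomorphism supplied by the $W_+$ trick --- both of the load-bearing steps your proposal leaves open.
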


  A similar  construction applies to knotted surfaces; we fix a handle structure on the surface exterior and swap algebraically dual pairs of 1- and 2-handles. Equivalently, we swap \emph{pairs} of 0-handles in the surface (i.e., disks) with 1-handles (i.e., bands) through a sequence of decompressions and compressions. By  interlocking copies of the surface, we   stop these swaps from being performed one at a time.
  
\begin{example}\label{ex:disk} The top of Figure~\ref{fig:hooked-disks} depicts the \emph{positron disk}, whose double branched cover is the positron cork \cite{hayden:curves}. Two copies  are interlocked and merged to yield a slice disk $D$ whose double branched cover is $W$ from Example~\ref{ex:plug}. Applying the rotation $\tau$ yields a disk $D'$ with $\partial D'=\partial D$; these are the  building blocks for Theorem~\ref{thm:stab}. \hfill $\diamond$
\end{example}

To build the exotic surfaces in Theorem~\ref{thm:stab}, we  attach additional bands along the knot $K=\partial D = \partial D'$ in a manner designed to annhilate the obstruction to topological isotopy while preserving the smooth obstruction from Khovanov homology. We prove that the resulting genus-1 surfaces are topologically isotopic using results of Conway-Powell \cite{conway-powell} (Theorem~\ref{thm:cp} below); the key technical step is a calculation of  equivariant intersection forms. Higher-genus examples  are then obtained through boundary-connected sums. 

The natural question is whether exotic 4-manifolds such as those constructed in Theorem~\ref{thm:embed} remain distinct after stabilization with $S^2 \! \times \! S^2$.  The proof of Theorem~\ref{thm:stab} offers positive evidence because (standard) internal stabilization of a surface corresponds to external stabilization of its double branched cover. In \S\ref{subsec:compare}, we further examine this connection and suggest a bridge via Lin's spectral sequence relating Bar~Natan homology and involutive monopole Floer homology \cite{lin:bar-natan}.

\begin{remark}
Several of the recent breakthroughs mentioned above occurred while this project was in progress (cf \cite{hayden:nsf}), so we draw some comparisons at the end of  \S\ref{subsec:compare}. In particular, Theorem~\ref{thm:stab} is most  comparable to work of Guth \cite{guth}, who used cabling operations to produce exotic surfaces (including disks) in $B^4$ that remain distinct after any prescribed number of internal stabilizations.
\end{remark}

\smallskip

\subsection{Obstructions from Bar-Natan homology.} \label{subsec:obstructions} 
The cobordism maps in Khovanov's original theory have proven to be effective at distinguishing knotted surfaces in $B^4$ \cite{sundberg-swann,hayden-sundberg,hkmps}, and they appear well-suited to  cases where explicit Floer-theoretic calculations are prohibitively complex. The   proof of Theorem~\ref{thm:stab} relies on the additional structure available in universal versions of Khovanov homology over $\kk[H]$, where $\kk$ is a field and $H$ is a formal variable. Our approach uses the notion of $H$-torsion order in (reduced) Bar-Natan homology $\bnr$, analogs of which have previously been used to study unknotting number and ribbon distance in \cite{alishahi:unknotting,alishahi-dowlin,sarkar:ribbon,gujral:ribbon,ilm:unknotting}.

We sketch our approach to Theorem~\ref{thm:stab}, continuing with the disks $D$ and $D'$ as a running example: For technical reasons, it is convenient to consider the mirrored, time-reversed disks $-D$ and $-D'$, which induce cobordism maps $\bnr(-D)$ and $\bnr(-D')$. The images of these maps lie in $\bnr(-K)$ and are generated by elements $\bnr(-D)(1)$ and $\bnr(-D')(1)$. Distinguishing the underlying disks $-D$ and $-D'$ amounts to showing that the difference 

\vspace{-7.5pt}

$$\delta = \bnr(-D)(1) - \bnr(-D')(1)$$ 

is a nonzero element in $\bnr(-K)$.  To do so, we perform the corresponding calculation in Khovanov's original reduced theory and then  lift it to Bar~Natan homology over $\kk[H]$. 

Next we turn to the \emph{stabilized} surfaces. The effect of internal stabilization is to multiply these cobordism maps by the variable  $H$ in $\kk[H]$ (cf Lemma~\ref{lem:stab}). Thus we must further prove that $H \cdot \delta$ is nonzero.\footnote{For the analogous $X$-action in Lee homology, Sarkar says such elements have \emph{nontrivial extortion order}.} This is achieved with the assistance of computer calculations that enable us to prove that \emph{all} nonzero elements of $\bnr(-K)$ with the same bigrading as $\delta$ survive multiplication by $H$, completing the argument.\footnote{For this $H$-action on Bar~Natan homology, we say this bigrading is a \emph{nontrivial torsion hoarder}.}

\subsection{Connections and comparisons with Floer homology.}\label{subsec:compare} In \cite{os:branched}, Ozsv\'ath-Szab\'o  established a spectral sequence relating the reduced Khovanov homology $\smash{\khr(L)}$ of a link $L\subset S^3$ to the Heegaard Floer homology of its mirror's double branched cover $\smash{\Sigma(-L)=\Sigma_2(S^3,-L)}$. Subsequent work has established a variety of similar spectral sequences relating Khovanov-type link homologies to Floer-type theories. In particular,  Lin \cite{lin:bar-natan} established a spectral sequence from a truncated version of a link's Bar-Natan homology over $\F_2[H]/H^2$, denoted $\smash{\bnr_2(L)}$, to the involutive monopole Floer homology of the  double branched cover $\Sigma_2(S^3,-L)$ over $\F_2[Q]/Q^2$, which we denote by $\smash{\widetilde{HMI}(\Sigma(-L))}$. Moreover, after setting $H=Q$, this is a spectral sequence of $\F_2[Q]/Q^2$-modules. 

The argument that distinguishes the cobordism maps on $\bnr(\, \cdot \,)$ over $\F_2[H]$ induced by stabilized surfaces   in the proof of Theorem~\ref{thm:stab} easily adapts to prove that these stabilized surfaces induce distinct maps in the truncated theory $\bnr_2(\, \cdot \,)$ over $\F_2[H]/H^2$. This hints at the possibility of using involutive monopole Floer homology to prove that the stabilized plug $W \smallsum (S^2 \! \times \! S^2)= \Sigma_2(B^4,D\smallsum T^2)$ remains  nontrivial.

\vspace{-4pt}

\begin{question}
Does the class $\delta_2 \in \bnr_2(-K)$ given by the difference of the disks' induced maps survive to the page $E^\infty \cong \widetilde{HMI}(\Sigma(K))$?
\end{question}

\vspace{-5pt}

We expect that the class $\delta_2 \in \bnr_2(-K)$ has a counterpart in  $\widetilde{HMI}(\Sigma(K))$, perhaps given by the difference of two particular contact invariants in monopole Floer homology. In particular, $W$ admits a Stein structure (as depicted in Figure~\ref{fig:positron-stein}) that induces a contact structure $\xi$ on $\partial W$, and the arguments in the proof of Theorem~\ref{thm:embed} imply that $\xi$ and $\xi'=\tilde\tau_*( \xi)$ induce distinct contact elements in monopole Floer homology. If this difference survives multiplication by $Q$, the stabilized plug is nontrivial.

\smallskip

\begin{remark} \textbf{(a)} 
We do not necessarily expect the class $\delta_2 \in \bnr_2(-K)$ to have a contact-geometric interpretation, e.g., as the difference of Plamenevskaya-type invariants \cite{plamenevskaya:transverse-Kh,lns:transverse} of two transverse representatives of $K$. 
There \emph{is} indeed a natural transverse representative $\mathcal{K}$ of $K$  which arises from realizing $D$ and $D'$ as pieces of complex curves in $B^4 \subset \mathbb{C}^2$ with transversely isotopic boundary in $S^3$; see Figure~\ref{fig:positron-stein}. 
However, the induced contact structure on $\Sigma(K)$ is $\tilde\tau$-equivariant up to isotopy, hence cannot be 
$\xi$ or $\xi'$. 

\vspace{-4.5pt}

\quad \ \textbf{(b)} The  elementary 4-manifolds underlying our  construction in \S\ref{sec:top} arise naturally when studying 4-dimensional $h$-cobordisms (cf \cite{kirby:cork}). In \cite{ladu}, Ladu studies the monopole Floer homology of such 4-manifolds, dubbed ``protocorks''; this may be a starting point towards understanding the involutive monopole Floer homology of our 4-manifolds. 
\end{remark}

\smallskip

Finally, we compare our examples and those in \cite{guth}. Guth's primary examples are obtained by taking $(p,1)$-cables of the positron disk (top of Figure~\ref{fig:hooked-disks}) for $p \geq 2$. For knot Floer homology $\hfk^-$ over $\F_2[U]$, internal stabilization acts as multiplication by $U$, and Guth proves that his surfaces induce cobordism maps whose images are generated by elements in $\hfk^-$ whose difference has $U$-torsion order $p-1$. This approach has several strengths, including the ability to detect large stabilization distance and the flexibility to be adapted to cables of many  pairs of disks \cite{guth-hayden-kang-park}. Our Khovanov-theoretic obstructions do not readily apply to such disks. For example, the Bar-Natan homology of the $(2,1)$-cable of the positron knot has $H$-torsion order equal to 1, the same as the positron knot itself. On the other hand,  the branched covers of Guth's stabilized surfaces are known to be diffeomorphic \cite{guth-hayden-kang-park}; those arguments do not apply to our examples.

An optimistic take may be that these differences are not weaknesses of Khovanov-type invariants, but rather that torsion order in Khovanov homology measures different  phenomena than $U$-torsion order in $\hfk^-$. Indeed, Lin's spectral sequence suggests $H$-torsion in Bar-Natan homology is analogous to the $Q$-torsion in involutive Floer theories, such as the $Q$-torsion used in Kang's recent proof that one stabilization is not enough for contractible 4-manifolds \cite{kang}.  This prediction aligns with the relative scarcity of $H$-torsion in Khovanov homology  (as evidenced by the difficulty in finding counterexamples to the Knight Move Conjecture \cite{manolescu-marengon} and by the weaker bounds on unknotting number and ribbon distance obtained in \cite{alishahi:unknotting,alishahi-dowlin,sarkar:ribbon,gujral:ribbon,ilm:unknotting} as compared to, for example, \cite{jmz:torsion}).   

\smallskip

\subsection*{Acknowledgements} The author thanks John Baldwin, Onkar Gujral, Mikhail Khovanov, Siddhi Krishna, Lukas Lewark, Ciprian Manolescu, Lisa Piccirillo, Isaac Sundberg, and Melissa Zhang for  suggestions, sanity checks, and stimulating conversations. Thanks also to BIRS-CMO  and the organizers of the workshop 
\textsl{Using Quantum Invariants to Do Interesting Topology} (22w5171), where the final piece of this project fell into place. The experimental phase of this project benefitted from   the Kirby Calculator \cite{KLO} and \texttt{khoca} \cite{khoca}. Additional support from NSF grant DMS-2243128.

\medskip
\section{The construction}\label{sec:top}


\subsection{Stabilization, $\boldsymbol{h}$-cobordisms,  and the recycling problem}

We begin by recalling that any $h$-cobordism $M$ between simply connected 4-manifolds $X_0$ and $X_1$  admits a handle structure consisting only of 2- and 3-handles (cf \cite{wall:4-manifolds,kirby:cork}). Moreover, we may arrange for these to arise in algebraically (but not geometrically) canceling 2-/3-handle pairs, and we let $n$ denote the number of such pairs.

 After attaching the 5-dimensional 2-handles to $X_0 \times [0,\epsilon]$, we obtain an intermediate 4-manifold $X_{1/2} \subset M$ that can be obtained from $X_0$ by $n$ stabilizations with  $\sxs$. Indeed, attaching a 5-dimensional 2-handle has the 4-dimensional effect of surgery on a loop. Since $X_0$ is simply connected, the loop is trivial and the surgery results in a stabilization with  $\sxs$ or a \emph{twisted} stabilization with $S^2 \tilde \times S^2$, but here the latter can be ruled out \cite[p147]{wall:4-manifolds}. Turning  $M$ upside down reverses the roles of 2- and 3-handles, so an analogous argument shows  that $X_{1/2}$ can also be obtained from $X_1$ by $n$ stabilizations. It follows that $X_0\, \smallsum\, n(\sxs) \cong X_1 \, \smallsum \, n (\sxs)$.  However, this naive upper bound on the stabilization number  can often be reduced by ``recycling'' an $\sxs$-summand, as below.

 \begin{wrapfigure}[10]{r}{0.18\textwidth}
\center
\vspace{-10.5pt}
\vspace{2pt}
 \def\svgwidth{.95\linewidth}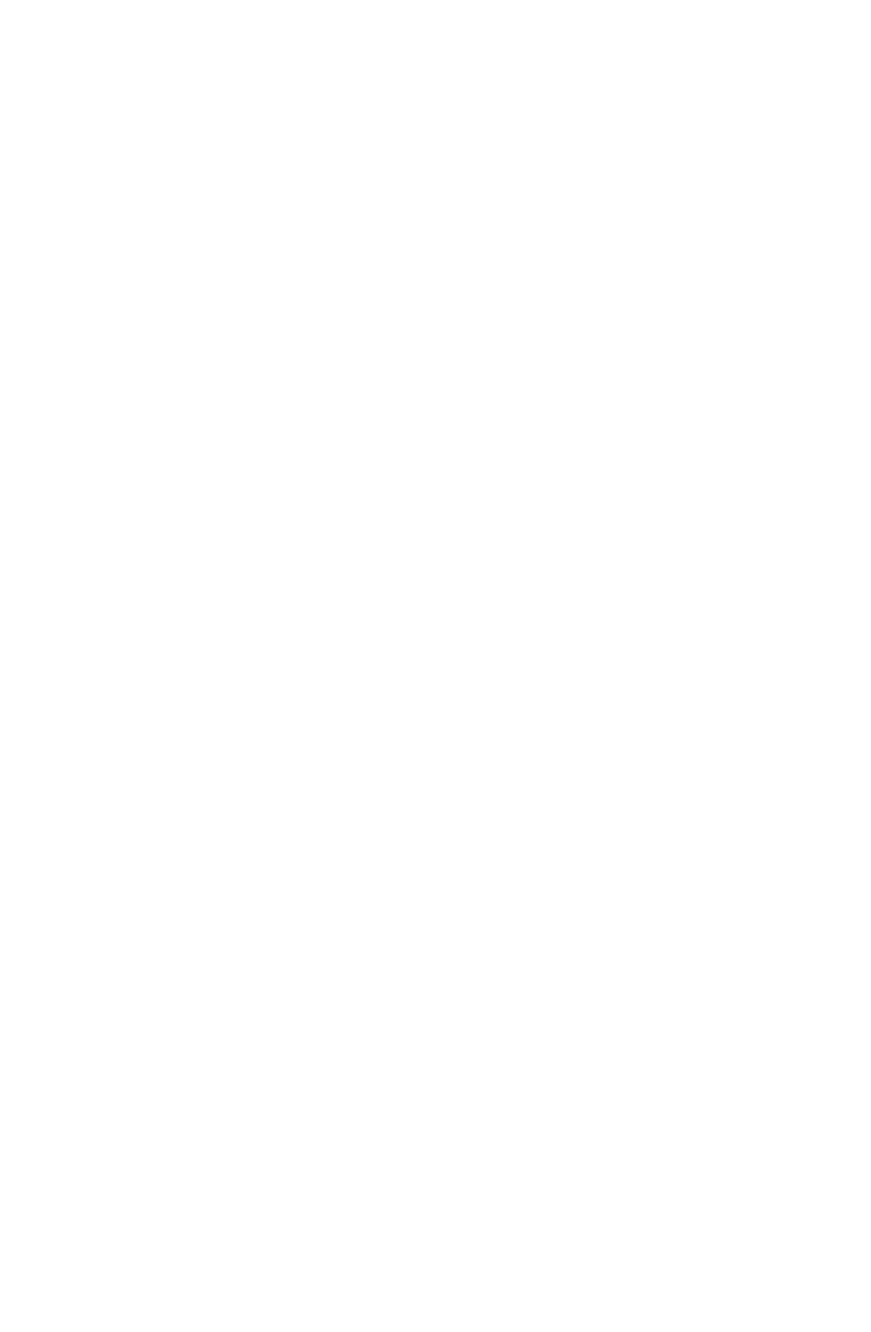

\vspace{-9.5pt}
\caption{}\label{fig:recycling-opportunity}
\end{wrapfigure}

\

\vspace{-20pt}

\begin{example} \textbf{(a)} As shown below in Figure~\ref{fig:recycle}, the positron cork becomes diffeomorphic (rel boundary) to its twist after one stabilization. If  exotic 4-manifolds $X_0$ and $X_1$ are related by twisting along two disjoint copies of the positron cork in $X_0$, then there is an $h$-cobordism from $X_0$ to $X_1$ with two 2-/3-handle pairs, implying  $X_0 \smallsum 2(\sxs) \cong X_1 \smallsum 2(\sxs)$. But because the corks are disjoint, we may use a single $\sxs$ summand for both corks, implying $X_0 \, \smallsum\, \sxs \cong X_1\, \smallsum \, \sxs$. 

\vspace{-4pt}

\quad \ \textbf{(b)} For a more subtle example, the reader  is encouraged to show that one stabilization trivializes the cork in Figure~\ref{fig:recycling-opportunity}. \end{example}

\begin{figure}\center

\def\svgwidth{.875\linewidth}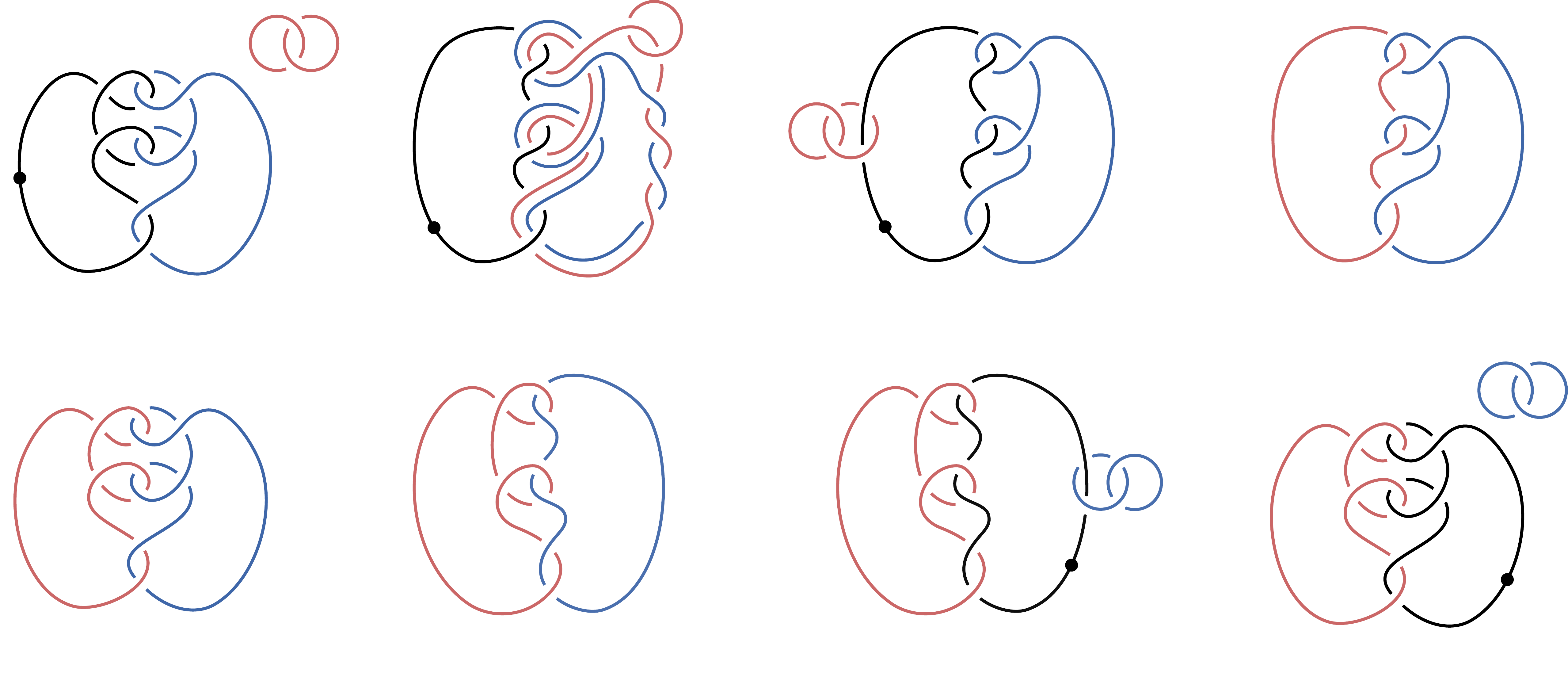
\captionsetup{width=.9\linewidth}

\caption{The positron cork twist after stabilization with $S^2  \times S^2$.  Part (b) is obtained from (a) by a handle slide. Part (c) is obtained by using handle slides over the small 0-framed meridional 2-handle to effect crossing changes. Parts (d)-(f) are related by isotopy. Parts (g)-(h) are obtained similarly to (a)-(d), in reverse.}\label{fig:recycle}

\vspace{-5pt}
\end{figure}


Returning to the  setup above, observe that $X_{1/2}$ contains two collections of disjoint, smoothly embedded 2-spheres $\mathcal{A}=\{A_1,\ldots,A_n\}$ and $\mathcal{B}=\{B_1,\ldots,B_n\}$ given by the attaching spheres for the 3-handles and the belt spheres of the 2-handles in $M$, respectively. (We assume these spheres are transverse to one another.) Recycling opportunities arise when the configuration of spheres $\mathcal{A} \cup \mathcal{B}$ is insufficiently complicated, allowing us to reorder handles so that some 3-handles appear before all 2-handles have been attached.

\subsection{Holding handles together}\label{subsec:holding} To produce pairs of 4-manifolds that are candidates to remain distinct after stabilization, we  \emph{begin} with a neighborhood of a sufficiently complicated configuration of spheres $\mathcal{A} \cup \mathcal{B}$ (much as in the proof of the cork theorem \cite{kirby:cork}), and then enlarge it by attaching 4-dimensional handles to yield a 4-manifold $X_{1/2}$. By attaching 5-dimensional 3-handles to $X_{1/2} \! \times \! [-\epsilon,\epsilon]$ along $\mathcal{B} \subset X_{1/2}  \! \times \! \{-\epsilon\}$ and $\mathcal{A} \subset X_{1/2} \! \times\! \{\epsilon\}$, we produce a cobordism between two 4-manifolds $X_0$ and $X_1$. These 4-manifolds are obtained by surgering $X_{1/2}$ along the spheres in $\mathcal{B}$ and $\mathcal{A}$, respectively.

\begin{remark} Neighborhoods of these sphere configurations are plumbings of disk bundles over spheres; their handle diagrams can be produced using  \cite[\S6.1]{gompfstipsicz}.  At the level of handle diagrams, $X_0$ and $X_1$ are obtained from $X_{1/2}$ by converting the 0-framed 2-handles associated to $\mathcal{B}$ and $\mathcal{A}$, respectively, into dotted 1-handle curves. 
\end{remark}


\begin{example} A schematic depiction of one such configuration of 2-spheres is shown on the left side of Figure~\ref{fig:spheres}. (Here however, we have \emph{not} arranged that $A_i \cdot B_j = \delta_{ij}$.)  On the right side of Figure~\ref{fig:spheres} is a handle diagram for a neighborhood $N$ of the configuration of spheres. The 4-manifold $W$ from Example~\ref{ex:plug} is obtained from $N$ by  attaching a mix of 0- and (-1)-framed 2-handles along meridians to the dotted 1-handle curves in Figure~\ref{fig:spheres}, followed by surgery on the 2-spheres $B_1$ and $B_2$.
\end{example}

\begin{figure}\center

\def\svgwidth{\linewidth}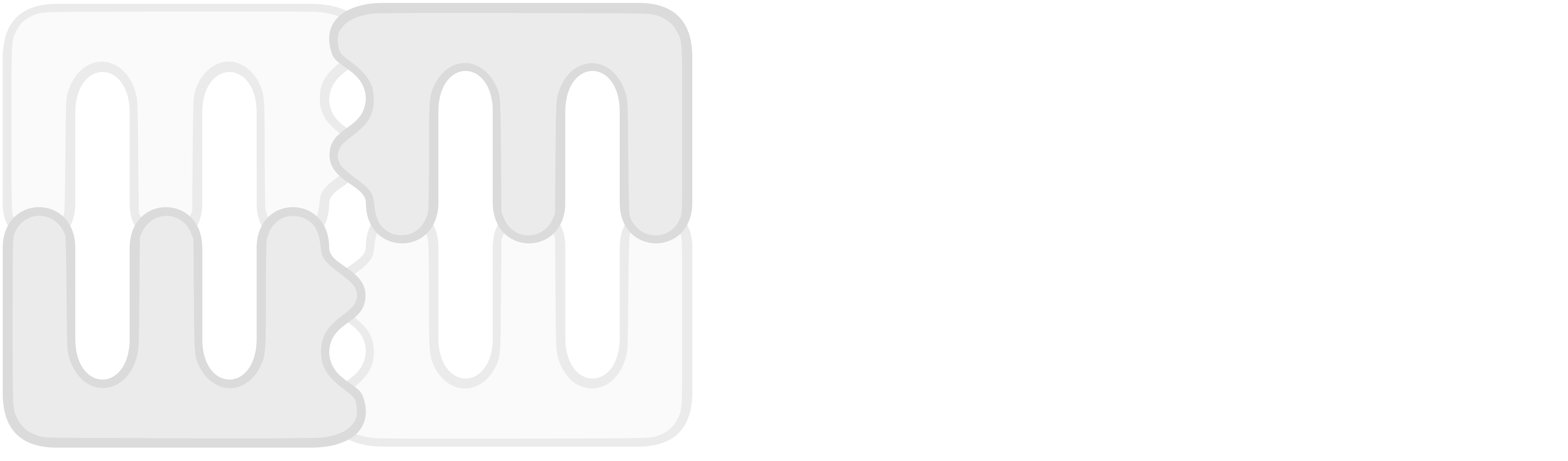

\caption{On the left, a schematic depicting a configuration of intersecting 2-spheres. On the right, a handle diagram for a neighborhood of this configuration.}\label{fig:spheres}

\end{figure}


We point out that, without ensuring the spheres in  $\mathcal{A}$ and $\mathcal{B}$ satisfy $A_i \cdot B_j = \delta_{ij}$, the 5-dimensional cobordism constructed above need not even be a homology cobordism. However, relaxing these conditions provides a larger class of useful 4-manifolds. 

\subsection{Knotted surfaces} For a knotted surface $\Sigma$ in a 4-manifold $X$, we may consider the effect of  \emph{external stabilization} (i.e., including $\Sigma$ into $X\smallsum \sxs$) or \emph{internal stabilization}. As described in \cite[\S2.1]{baykur-sunukjian:stab}, the latter consists of choosing an embedded 3-dimensional 1-handle $h \approx [-1,1] \! \times \! D^2$ in $X$ that intersects  $\Sigma$ only along $ \{\pm1\} \times D^2$, then removing $ \{\pm1\} \times \mathring{D}^2$ from $\Sigma$ and gluing in $[-1,1]\times \partial D^2$. In this paper, we will only consider internal stabilizations that preserve orientability.  As shown in \cite{baykur-sunukjian:stab}, any pair of homologous surfaces $\Sigma_0$ and $\Sigma_1$ of equal genus in a 4-manifold $X$ become smoothly isotopic after $n$ internal stabilizations for some $n \geq 0$.

 \begin{wrapfigure}[16]{r}{0.235\textwidth}
\center
\vspace{-11.5pt}
\includegraphics[width=.96\linewidth]{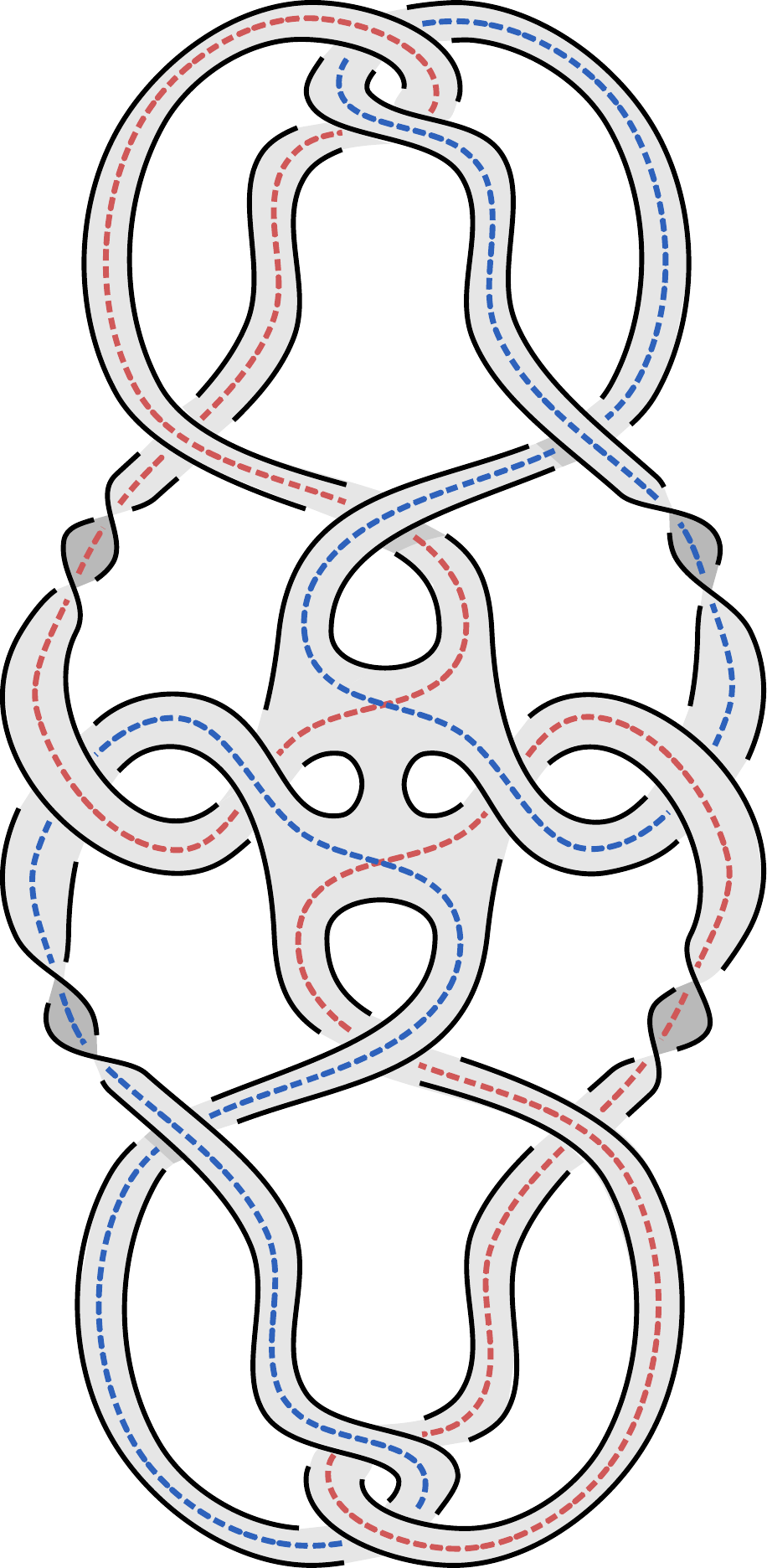}

\vspace{-5pt}
\caption{}\label{fig:seifert}
\end{wrapfigure}

\

\vspace{-18pt}

The stabilized surface  $\Sigma_{1/2} \subset X$ contains two distinct sets of $n$  disjoint simple closed curves that bound compressing disks in $X \setminus \Sigma_{1/2}$; compressing along one system of curves yields $\Sigma_0$ and the other yields $\Sigma_1$.  To prevent the recycling of tubes, one seeks to ensure that no compressions from one set of curves can be performed if \emph{any} compressions from the other set of curves has been performed.  Explicit examples can be constructed directly by interlocking pairs of surfaces as described in \S\ref{sec:intro}, or by realizing them as branch sets of branched coverings $X\to B^4$ where $X$ is a 4-manifold constructed as in \S\ref{subsec:holding}.

\begin{example} The disks $D$ and $D'$ from Figure~\ref{fig:hooked-disks} are obtained by compressing the genus-2 surface in Figure~\ref{fig:seifert} along the two sets of curves shown. Observe that the natural compressing disk for each red curve has clasp intersections with \emph{both} blue curves' compressing disks, and vice versa, preventing any obvious recycling of tubes. 
\end{example}

\medskip

\section{Input from Bar-Natan homology} 

\smallskip

We refer the reader to \cite{barnatan} for background, but we recall some essential details here. Our discussion mostly follows \cite{kwz:immersed}, including their notation and conventions. Throughout, we will use $\kk$ to denote the field $\kk_2 $ and $H$ to denote a formal variable. Over the ring $\kk[H]$, the Bar-Natan complex $\cbnr(K)$ admits the following structure theorem (where $h^a q^b \,\kk[H]$ denotes a bigraded copy of $\kk[H]$ whose generator $1 = H^0$ sits in bigrading $(a,b)$):

\setlength\extrarowheight{2pt}

 \begin{wrapfigure}[10]{r}[.85cm]{0.31\textwidth} \vspace{6pt}
 \vspace{-2.5pt}\small
 \footnotesize
\hspace{3pt} \begin{tabular}{m{28pt}||m{50pt}c}
 \arrayrulecolor{gray}
  \multicolumn{1}{r||}{ \color{gray}{\backslashbox{\raisebox{2pt}{\!$q$\!}}{\!$h$}} } & \multicolumn{1}{c}{\color{gray} \footnotesize \hspace{7pt} $a$ \hspace{7 pt} \vrule   \hspace{3pt} $a+1$ \hspace{0pt}  }\\
    \hline \hline \
    \raisebox{-3pt}{\color{gray}{\hspace{8pt}\scriptsize$b$}} & \multirow{2}{*}{
         \hspace{-22.5pt}
         \begin{minipage}{2.5cm}\vspace{05pt}
      \begin{tikzcd}[ampersand replacement=\&, row sep = 5pt, column sep = 17.5pt   
    ,/tikz/column 1/.append style={anchor=base east}
    ,/tikz/column 2/.append style={anchor=base west}    ]
             \& \hspace{3pt} \xi \arrow[gray,bend left=60]{d}{\text{$H \cdot$}} \\[.035cm]
            \& H\xi \\[-.25cm]
            \& \hspace{6pt} \vdots   \\[.05cm]
           \hspace{5pt}   \eta \hspace{2pt} \arrow[line width=0.6pt, maps to]{r} \& H^m \xi  \\[0.035cm]
             \hspace{5pt}    H\eta \arrow[line width=0.6pt, maps to]{r} \& [xshift = 3 pt] H^{m+1} \xi   
            \\[-.25cm]
          \vdots  \hspace{6pt} \&  \hspace{6pt} \vdots   
        \end{tikzcd}
        \end{minipage}
    }\\[.25cm]
    \cline{1-1}
     \raisebox{-3.5pt}{\scriptsize\color{gray}{\hspace{3pt} $b\!-\!2$}}  & 
     \\[.25cm]
    \cline{1-1}
    {\color{gray} \raisebox{-3.5pt}{\hspace{9pt} \vdots}} & 
     \\[.25cm]
    \cline{1-1}
   { \color{gray} \raisebox{-3.5pt}{\hspace{1 pt} \scriptsize$b\!-\!2m$}} & 
     \\[.25cm]
    \cline{1-1}  
        {\color{gray}  \raisebox{-3.5pt}{\hspace{-7pt} \scriptsize$b\!-\!2m\!-\!2$}} & 
     \\[.25cm] 
     \cline{1-1}{\color{gray} \hspace{9pt} \vdots}
\end{tabular}
\vspace{-3pt}
\caption{The  map in \eqref{eq:torsion}. \ }\label{fig:tower-map}
\end{wrapfigure}
\color{black}

\

\vspace{-22.5pt}

\smallskip

\begin{theorem}\label{thm:structure}
For any knot $K \subset S^3$ and any field $\kk$, the complex $\cbnr(K)$ is bigraded chain homotopic to a complex of the form

\begin{minipage}{\linewidth}
\begin{equation}\label{eq:torsion}
h^0 q^{s}\, \kk[H] \, \bigoplus_i \bigg[ h^{a_i-1} q^{b_i-2m_i} \kk[H] \xlongrightarrow{\ H^{m_i} \cdot \ }  h^{a_i} q^{b_i} \kk[H]\bigg]
\end{equation}
\end{minipage}


\noindent for uniquely determined $a_i, m_i \in \Z$ with $m_i>0$ and  $s, b_i  \in 2\Z$. 

The differential vanishes on the tower $h^0q^s \kk[H]$ and is given as in Figure~\ref{fig:tower-map} on the remaining summands (where $\eta$ and $\xi$ represent generators of $h^{a-1} q^{b-2m} \kk[H]$ and $h^{a} q^{b} \kk[H]$, respectively).

In particular, $\bnr(K)$ decomposes as the direct sum of a tower $h^0 q^s \, \kk[H]$ and $H$-torsion summands $h^{a_i} q^{b_i} \kk[H]/(H^{m_i})$.
\end{theorem}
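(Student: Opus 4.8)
The plan is to treat $\cbnr(K)$ as what it manifestly is --- a bounded, bigraded chain complex of finitely generated free $\kk[H]$-modules, since each Bar--Natan resolution contributes a tensor power of the free rank-two module $\kk[H][X]/(X^2-HX)$ and the differential is $\kk[H]$-linear and bihomogeneous of degree $(1,0)$ --- and then to run the classical structure theory of modules over the principal ideal domain $\kk[H]$, whose only homogeneous ideals are the powers $(H^n)$. The first step is to pass to a \emph{minimal} model: whenever a component of the differential between two rank-one summands is a unit of $\kk[H]$ --- equivalently, whenever two generators of equal $q$-degree and consecutive $h$-degrees are joined by the differential with nonzero scalar coefficient --- the Gaussian elimination (cancellation) lemma cancels that pair of generators up to bigraded chain homotopy equivalence. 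Each cancellation strictly lowers the total rank, so the process terminates at a bigraded complex $C'$, chain homotopy equivalent to $\cbnr(K)$, all of whose differential components lie in the maximal homogeneous ideal $(H)$.

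The second step decomposes $C'$ into elementary pieces by iterating a graded Smith normal form. Let $j$ be the lowest nonzero homological degree of $C'$. Diagonalize $d_j\colon C'_j\to C'_{j+1}$ by bigraded changes of basis of $C'_j$ and $C'_{j+1}$; this has no effect on $C'$ except to act on $d_{j+1}$ by $\kk[H]$-linear combinations of its columns, which preserves minimality because all components of $d_{j+1}$ already lie in $(H)$. Minimality forces each nonzero diagonal entry of $d_j$ to be $H^{m}$ with $m\ge 1$, and $d_{j+1}d_j=0$ together with the freeness of $C'_{j+2}$ forces the target generator of each such entry into $\ker d_{j+1}$; hence every nonzero diagonal entry peels off a direct-summand subcomplex $\bigl[\,\kk[H]\xrightarrow{\,H^{m}\,}\kk[H]\,\bigr]$ in homological degrees $j,j+1$, while the generators of $C'_j$ lying in $\ker d_j$ peel off as free summands supported in homological degree $j$. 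The remainder has lowest degree $j+1$, so induction completes the decomposition. Reading off bidegrees, $\cbnr(K)$ becomes a direct sum of zero-differential free summands and two-step $H$-torsion blocks $h^{a_i-1}q^{b_i-2m_i}\kk[H]\xrightarrow{H^{m_i}}h^{a_i}q^{b_i}\kk[H]$; extending each block's differential $\kk[H]$-linearly from $\eta\mapsto H^{m_i}\xi$ reproduces \eqref{eq:torsion} and the map pictured in Figure~\ref{fig:tower-map}.

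The last step pins down the free part and the numerology. Inverting $H$ uses $\kk(H)[X]/(X^2-HX)\cong\kk(H)\times\kk(H)$, so $\bnr(K)\otimes_{\kk[H]}\kk(H)$ is the Bar--Natan--Lee deformation over the field $\kk(H)$, which for a knot has total rank one and is supported in homological degree $0$; this forces the free part of $\bnr(K)$ to be a single tower $h^0 q^{s}\kk[H]$. Since the reduced complex of a knot is supported in even quantum degrees, $s$ and every $b_i$ and $b_i-2m_i$ lie in $2\Z$, and $m_i>0$ because minimality removed all acyclic blocks $\kk[H]\xrightarrow{1}\kk[H]$. Finally, the values of $s$ and the multiset $\{(a_i,b_i,m_i)\}$ are unique because the elementary-divisor decomposition of the finitely generated bigraded $\kk[H]$-module $\bnr(K)$ (the homology of $\cbnr(K)$) is unique: the free summand records $s$ and that it sits in homological degree $0$, and each torsion summand $h^{a_i}q^{b_i}\kk[H]/(H^{m_i})$ is detected by the bidegree and the order of its generator. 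I expect the real technical care to be concentrated in the second step --- running the Smith-normal-form reduction simultaneously across all homological degrees while respecting the bigrading and the relation $d^2=0$; the remaining ingredients are either classical PID theory or standard facts about the Bar--Natan--Lee deformation.
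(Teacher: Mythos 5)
Your argument is correct. Note, however, that the paper never proves Theorem~\ref{thm:structure}: it is recalled as a known structure theorem, with the discussion following Bar-Natan's universal theory and the conventions of \cite{kwz:immersed}, so there is no in-paper proof to compare against. The route you take --- Gaussian elimination to a minimal model in which every differential entry lies in $(H)$, a graded Smith-normal-form induction over the graded PID $\kk[H]$ splitting the minimal complex into one-term free towers and two-term blocks $[\,\kk[H]\xrightarrow{\,H^{m}\,}\kk[H]\,]$, and localization at $H$ (the Lee/Bar-Natan--Turner deformation) to force the free part to be a single tower in bidegree $(0,s)$ --- is exactly the standard argument underlying the cited result, and your treatment of the splitting-off step (diagonal entries $H^{m}$ by minimality, $d^2=0$ plus torsion-freeness of the next term forcing the target generator into the kernel, so each block is a direct summand) is sound. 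Two points are worth making explicit in a write-up: in the Smith-normal-form step, homogeneous entries are monomials $cH^{k}$, so picking a pivot of minimal $H$-exponent guarantees that all clearing operations are themselves homogeneous and hence bigraded; and the inputs that the localized reduced homology of a knot has rank one concentrated in homological degree $0$, and that the reduced complex of a knot is supported in even quantum gradings, are theorems of Lee/Bar-Natan/Turner type (in their reduced form) and should be cited rather than presented as purely formal consequences. With those citations in place, your proof is a complete and self-contained justification of the statement the paper imports.
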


\medskip

The quotient of $\cbnr(K)$ by the subcomplex $H  \cdot  \cbnr(K)$ (i.e., the result of setting $H=0$) yields a complex with trivial differential whose homology yields $\khr(K)$. 

\subsection*{Cobordism maps} Our arguments  rely on the functoriality of Bar-Natan's theory: An oriented link cobordism $\Sigma: K_0 \to K_1$ in $S^3 \times [0,1]$ induces a bigraded chain map

\begin{minipage}{\linewidth}
$$\cbnr(\Sigma): \cbnr(K_0)_{(h,q)} \to \cbnr(K_1)_{(h,q+\chi(\Sigma))}$$
\end{minipage}

\smallskip


\noindent that induces a homomorphism $\bnr(\Sigma): \bnr(K_0) \to \bnr(K_1)$  \cite{barnatan}. The map $\bnr(\Sigma)$ is invariant up to multiplication by $\pm1$ under diffeomorphisms of $S^3 \! \times \! [0,1]$  fixing $\partial \Sigma$ setwise (\cite{barnatan,morrison-walker-wedrich}). Indeed, the  homotopy type of the chain map $\cbnr(\Sigma)$ is itself invariant (up to sign), so we obtain a commutative diagram

\vspace{-22pt}
\begin{equation}\label{eq:project}
\begin{tikzcd}
\cbnr(K_0) \arrow{r}{\cbnr(\Sigma)}   \arrow{d}{\pi} &[1cm] \cbnr(K_1) \arrow{d}{\pi}\\[.125cm]%
\ckhr(K_0)  \arrow{r}{\ckhr(\Sigma)}  & \ckhr(K_1) 
\end{tikzcd}
\end{equation}

\vspace{-8pt}

\vspace{-1pt}

\noindent which induces a corresponding commutative diagram at the level of homology.  We will not make direct calculations with the cobordism maps in Bar-Natan's theory; instead, we will perform calculations at the level of Khovanov homology and lift our arguments to Bar-Natan's theory where needed. The maps on $\ckhr$ induced by Bar-Natan's theory coincide with the maps in \cite{jacobsson}.  As an additional reference for these chain-level maps, we refer the reader to the tables in \cite{hayden-sundberg}.

\begin{remark}\label{rem:puncture}
While the reduced theory offers some conveniences, it does not directly assign maps to  cobordisms $\emptyset \to K$ or $K \to \emptyset$. We will follow the standard remedy: Let $K \subset S^3$ be a knot bounding a smooth surface $\Sigma \subset B^4$. Choose any point $p \in \mathring{\Sigma}$ and fix a small 4-ball $V$ centered at $p$ such that the pair $(V,\Sigma \cap V)$ is diffeomorphic to the standard pair $(B^4,D^2)$ bounded by $(S^3,U)$. Fixing an identification between $B^4 \setminus \mathring{V}$  and $S^3  \! \times\!  [0,1]$ (restricting to the identity on $\partial B^4 = S^3 \! \times \! \{1\}$) gives rise to an associated punctured cobordism $\Sigma \setminus \mathring{D}^2$ in $S^3  \! \times\!  [0,1]$ from the unknot $U \subset S^3  \! \times\!  \{0\}$ to the knot $K \subset S^3 \! \times\! \{1\}$.  We let $\ckhr(\Sigma)$ denote the associated cobordism map
$\ckhr(\Sigma \setminus \mathring{D}^2) : \ckhr(U) \to \ckhr(K).$

While different choices in the construction may yield different punctured cobordisms in $S^3  \! \times\!  [0,1]$ from $U$ to $K$, these  will agree up to diffeomorphisms of $S^3  \! \times\!  [0,1]$ fixing $S^3  \! \times\!  \{1\}$ pointwise. The diffeomorphism of  $S^3  \! \times\!  \{0\}$ may be nontrivial but is isotopic to the identity by Cerf's theorem; thus its induced  map differs by precomposing with an automorphism of $\khr(U)$ induced by an isotopy of $S^3$ taking $U$ back to itself, hence by  $\pm \mathbbm{1}$.
\end{remark}
\vspace{-1pt}

Finally, we recall the effect of internal stabilization on these cobordism maps:

\vspace{-3pt}

\begin{lemma}[{cf \cite[Corollary~2.3]{alishahi:unknotting}, \cite[Proposition~6.11]{lipshitz-sarkar:mixed}}]\label{lem:stab}
Given a cobordism  $\Sigma$ from $K_0$  to $K_1$, let $\Sigma'$ be the cobordism obtained by stabilizing $\Sigma$ using an arc in $S^3 \times [0,1]$ whose endpoints lie on a single component of $\Sigma$ (and whose interior is disjoint from $\Sigma$). The map induced by $\Sigma'$ satisfies $\bnr(\Sigma')=(2x-H)\cdot \bnr(\Sigma) \equiv H \cdot \bnr(\Sigma)$ over $\F_2[H]$. 
\end{lemma}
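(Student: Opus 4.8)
The plan is to realize the stabilization as an \emph{in situ} application of Bar--Natan's neck-cutting relation to the tube added to $\Sigma$, and then to use the fact that a dot on a cobordism slides freely within a connected component. First I would fix the local model: writing $\alpha \subset S^3 \times [0,1]$ for the stabilizing arc, with $\mathring{\alpha} \cap \Sigma = \emptyset$ and both endpoints of $\alpha$ on a single component $\Sigma_0 \subseteq \Sigma$, the stabilized cobordism $\Sigma'$ is obtained from $\Sigma$ by deleting two small disks $D_1, D_2 \subset \Sigma_0$ centered at the feet of $\alpha$ and gluing in a thin annulus $A$ (a tube following $\alpha$) with $\partial A = \partial D_1 \sqcup \partial D_2$. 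I would then choose a circle $c$ in the interior of $A$, splitting $A$ into sub-annuli $A_1$ (from $\partial D_1$ to $c$) and $A_2$ (from $c$ to $\partial D_2$).

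Next I would apply the Bar--Natan neck-cutting relation along $c$. This expresses the chain map $\cbnr(\Sigma')$ as a $\kk[H]$-linear combination of three decorated cobordism maps: capping the $c$-boundary of $A_1$ with a dotted disk and that of $A_2$ with an undotted disk; the same with the dot on the $A_2$ side; and $-H$ times the cobordism with both caps undotted. In each term, capping $A_i$ along $c$ produces a finger along (half of) $\alpha$ which retracts back onto $D_i$ by an isotopy supported near $\alpha$ — here is where $\mathring{\alpha}\cap\Sigma=\emptyset$ is used — so the underlying surface of each term is identified with $\Sigma$ itself. Thus, up to chain homotopy,
$$\cbnr(\Sigma') \simeq \cbnr(\Sigma_{\bullet, p_1}) + \cbnr(\Sigma_{\bullet, p_2}) - H\cdot \cbnr(\Sigma),$$
where $\Sigma_{\bullet,p}$ denotes $\Sigma$ decorated by a single dot at $p$, with $p_1 \in D_1$ and $p_2 \in D_2$. (Equivalently, one may bypass the explicit neck-cutting and simply observe that a local stabilization inserts the genus-one handle endomorphism $m\circ\Delta$ of the Bar--Natan Frobenius algebra, which is multiplication by $2x - H$.)

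Then I would invoke functoriality: a dot on a cobordism may be moved along any path inside one connected component without changing the induced map up to sign, since this is an ambient isotopy of the decorated surface (as already used for diffeomorphism-invariance above, via \cite{barnatan,morrison-walker-wedrich}). Since $p_1$ and $p_2$ both lie on $\Sigma_0$, the maps $\bnr(\Sigma_{\bullet,p_1})$ and $\bnr(\Sigma_{\bullet,p_2})$ agree; sliding the dot into a collar of $K_0$ (or $K_1$) identifies this common map with $x\cdot\bnr(\Sigma)$, where $x$ is the module action of the Bar--Natan variable. Passing to homology gives $\bnr(\Sigma') = (2x-H)\cdot\bnr(\Sigma)$, and over $\kk=\kk_2$ the term $2x$ vanishes and signs are irrelevant, yielding $\bnr(\Sigma') = H\cdot\bnr(\Sigma)$.

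The main obstacle I anticipate is the bookkeeping in the first two steps rather than any deep point: one must verify that the hypotheses on $\alpha$ (embedded, interior disjoint from $\Sigma$, endpoints on a single component) genuinely allow the neck-cutting relation to be applied cleanly and the fingers to be retracted, and one must be careful about the reduced-theory conventions for where the dot/basepoint action is inserted (cf.\ Remark~\ref{rem:puncture}). Notably, no explicit computation of the $x$-action on $\bnr(K_i)$ is needed, since it is carried symbolically throughout and only killed at the very end by passing to characteristic $2$.
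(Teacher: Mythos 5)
Your argument is correct, and it is essentially the same as the paper's: the paper gives no proof of this lemma, citing instead \cite[Corollary~2.3]{alishahi:unknotting} and \cite[Proposition~6.11]{lipshitz-sarkar:mixed}, whose proofs are exactly your neck-cutting decomposition of the tube (dotted cap plus dotted cup minus $H$ times the undotted caps) followed by dot migration and identification of the dotted surface with the $x$-action. The one soft spot is justifying dot migration by ``ambient isotopy of the decorated surface'': the clean justification is algebraic (commutativity of $m$ and the fact that $\Delta$ is a bimodule map over the Bar--Natan Frobenius algebra), though for the conclusion over $\F_2[H]$ you only need the two dotted terms to agree so that they cancel mod $2$, leaving $H\cdot\bnr(\Sigma)$.
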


\medskip

\section{Distinguishing surfaces after internal stabilization}

\smallskip

\subsection{The underlying disks.} We now return to the disks $D$ and $D'$ bounded by the knot $K$ from from Figure~\ref{fig:hooked-disks} in \S\ref{sec:intro}. For convenience, we work with a slightly simplified diagram for $K$ shown on the left side of Figure~\ref{fig:main-phi}, where the blue bands correspond to the disk $D$ and the green bands correspond to $D'$. (If desired, the reader may take this to be the new definition of $K$ and its slice disks.) 

\begin{figure}[b]
\center

\vspace{-1pt}

\def\svgwidth{.78\linewidth}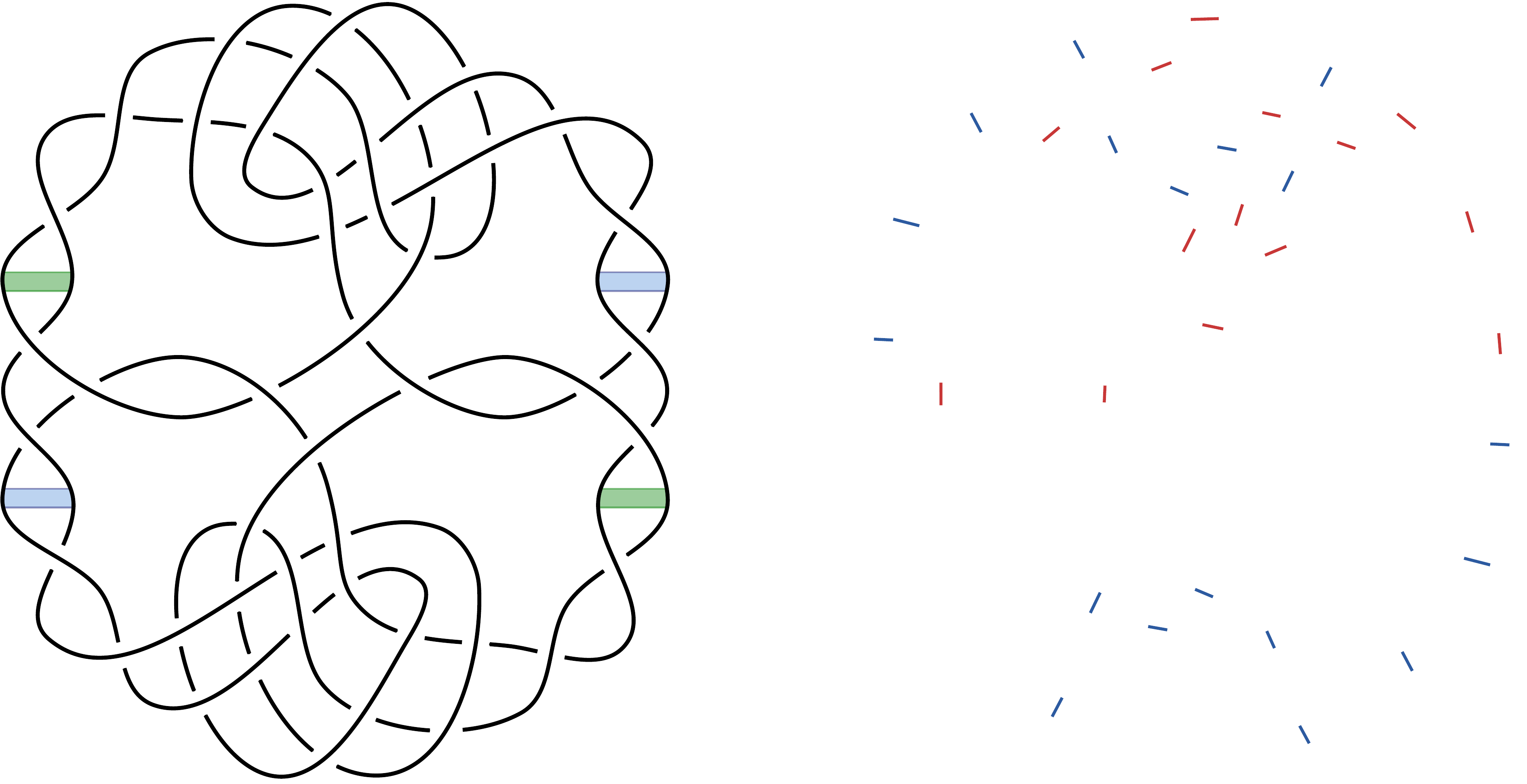

\vspace{-5pt}

\captionsetup{width=.925\linewidth}
\caption{A simplified diagram of the knot $K$, together with a cycle $\phi \in \khr(K)$.}\label{fig:main-phi}
\vspace{-23pt}

\end{figure}

\vspace{-3pt}

\begin{proposition} \label{prop:vanilla}
The disks $D$ and $D'$ induce distinct maps 
$\, \bnr(D)  \neq \bnr(D')$.
\end{proposition}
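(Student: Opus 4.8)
The plan is to push the entire question down into (Khovanov's original) reduced homology over $\F_2$, compute the two induced maps on generators explicitly, and separate them using the class $\phi$ drawn in Figure~\ref{fig:main-phi}.

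\textbf{Step 1: reduce to reduced Khovanov homology.} Using the puncture convention of Remark~\ref{rem:puncture}, $\bnr(D)$ and $\bnr(D')$ are $\F_2[H]$-linear maps $\bnr(U)\to\bnr(K)$, each determined by the image of the generator $1\in\bnr(U)=\F_2[H]\cdot 1$. The commuting square~\eqref{eq:project} gives $\pi\circ\bnr(D)=\khr(D)\circ\pi$ on homology, and $\pi(1)$ generates $\khr(U)$; so it is enough to show $\khr(D)(1)\neq\khr(D')(1)$ in $\khr(K)$, i.e.\ that $\delta=\khr(D)(1)+\khr(D')(1)$ is a nonzero class over $\F_2$. (Equivalently, after mirroring one works with $-D,-D'$ in $\khr(-K)$, the form used later in the proof of Theorem~\ref{thm:stab}, but this is inessential here.) Working over $\F_2$ is what makes the $\pm 1$ indeterminacies in functoriality and in Remark~\ref{rem:puncture} harmless.

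\textbf{Step 2: compute the images on generators.} Present the punctured cobordisms $D\setminus\mathring{D}^2$ and $D'\setminus\mathring{D}^2$ as movies from the once-punctured unknot to the diagram of $K$ in Figure~\ref{fig:main-phi}: from $U$, perform births creating the remaining disk components of Figure~\ref{fig:hooked-disks}, then the saddle moves realizing the blue bands $b_1,b_2$ (respectively the green bands $b_1',b_2'$). Feeding $1\in\ckhr(U)$ through the associated chain map --- births insert a circle labelled by the unit, saddles act by the merge and split maps, following the conventions of \cite{jacobsson} and the tables in \cite{hayden-sundberg} --- expresses $\khr(D)(1)$ and $\khr(D')(1)$ as explicit cycles in the reduced Khovanov complex of that diagram. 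The ambient rotation $\tau$ of Figure~\ref{fig:hooked-disks} fixes $K$ and exchanges the two band systems, so $\khr(D')(1)=\tau_*\khr(D)(1)$; this roughly halves the bookkeeping.

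\textbf{Step 3: detect the difference and conclude.} Verify that the cycle $\phi$ of Figure~\ref{fig:main-phi} represents a nonzero class of $\khr(K)$ (equivalently, a nontrivial functional on $\khr(-K)$ under the duality pairing, or a suitable capping cobordism), and pair it against the two images: show that $\langle\phi,\khr(D)(1)\rangle$ and $\langle\phi,\khr(D')(1)\rangle$ take different values in $\F_2$, so that $\langle\phi,\delta\rangle\neq 0$. Then $\delta\neq 0$ in $\khr(K)$, hence $\bnr(D)(1)\neq\bnr(D')(1)$, and since these maps are determined by their values on $1$ we conclude $\bnr(D)\neq\bnr(D')$.

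\textbf{Main obstacle.} The substance is Steps 2--3: tracking the generator through the (moderately involved) movies and confirming that $\phi$ genuinely separates the two images rather than vanishing on both. This is a finite computation, carried out partly by hand and partly with computer assistance (\cite{khoca},\cite{KLO}), with the $\tau$-symmetry serving as both a labor-saving device (one only needs the computation for $D$ and for $\tau^*\phi$) and a consistency check.
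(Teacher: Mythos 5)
Your architecture is the paper's: reduce from $\bnr$ to $\khr$ over $\F_2$ via the projection square \eqref{eq:project}, and separate the two maps using the explicit class $\phi$ of Figure~\ref{fig:main-phi}; the lift to $\bnr$ and the $\pm1$ remarks are fine and match how the paper argues (cf.\ Proposition~\ref{prop:hooked-disks}). The one place your plan needs repair is Step 3. As written, $\langle\phi,\khr(D)(1)\rangle$ pairs two elements of $\khr(K)$; the natural duality pairing is between $\khr(K)$ and $\khr(-K)$, so ``pairing against $\phi$'' descends to homology only if the corresponding dual element is a cocycle on $\ckhr(K)$ (equivalently, a cycle in the mirror complex) --- a condition on the \emph{incoming} differentials that is different from, and not implied by, the easily checked fact that $d\phi=0$ in $\ckhr(K)$ (all 0-resolution arcs join distinct $x$-labeled circles). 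The clean fix is exactly your parenthetical ``suitable capping cobordism,'' and it is what the paper does: puncture $D$ and $D'$ at an interior minimum and read them as cobordisms from $K$ to $U$, so they induce maps $\khr(K)\to\khr(U)\cong\F_2$, and evaluate these on the cycle $\phi$ itself. In that formulation the decisive computation collapses: tracking $\phi$ through the movie for $D$ (Figure~\ref{fig:main-phi-calc}) yields the generator of $\khr(U)$, while for $D'$ the very first saddles, along $b_1'$ and $b_2'$, merge distinct $x$-labeled circles of $\phi$, so its image vanishes already at the chain level --- no explicit computation of $\khr(D)(1)$, $\khr(D')(1)$ as classes in $\khr(K)$ is needed, and one also never has to verify separately that $\phi\neq 0$ in $\khr(K)$ (that follows a posteriori from its nonzero image). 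Your $\tau$-equivariance shortcut is legitimate but becomes unnecessary once the computation is run in this direction.
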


\vspace{-8pt}

\begin{proof}
Begin by removing a small neighborhood of the absolute minimum of $D \subset B^4$ to obtain a concordance in $S^3 \times [0,1]$ from $-K$ to $-U$ as discussed in Remark~\ref{rem:puncture}. It has a link cobordism movie that begins with a pair of saddle moves on $K$ using the blue bands $b_1$ and $b_2$ in  Figure~\ref{fig:main-phi}. The full link cobordism movie has its key steps illustrated in the first and third rows of Figure~\ref{fig:main-phi-calc}. (Note that the reduced theory requires a choice of basepoint, which we denote by $p$ in Figure~\ref{fig:main-phi} and which is fixed throughout the concordance.)

The righthand side of Figure~\ref{fig:main-phi} depicts a labeled smoothing of the diagram that is easily checked to be a cycle, because all 0-resolution arcs (shown in red) join distinct $x$-labeled circles. Denote the resulting homology class by $\phi \in \khr(K)$. Figure~\ref{fig:main-phi-calc} tracks the image of $\phi$ under the map induced by the concordance $K \to U$ associated to the disk $D$, showing that $\phi$ is mapped to a generator of $\khr(U) \cong \F_2$.

\begin{figure}
\center
\def\svgwidth{\linewidth}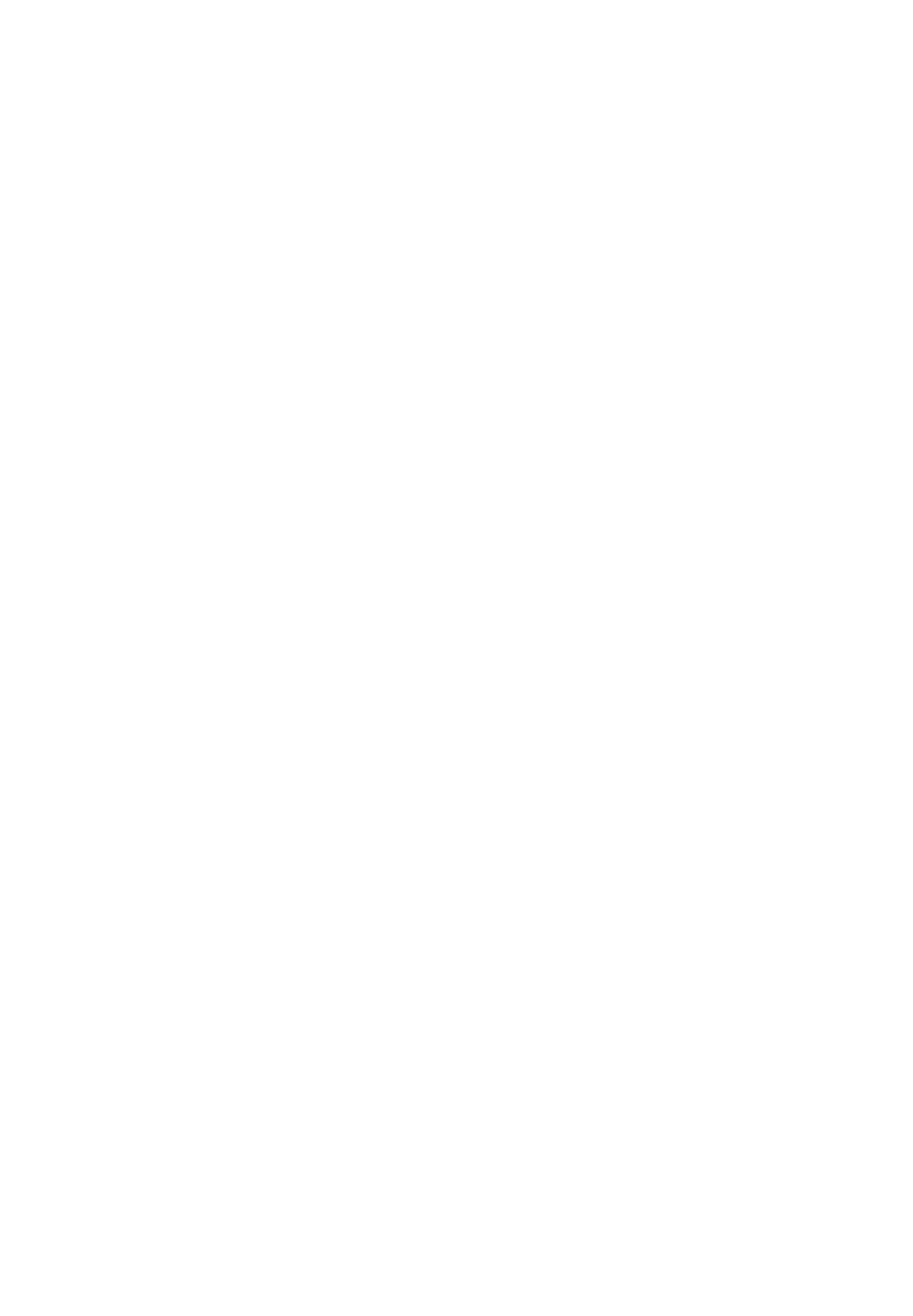

\medskip

\caption{Tracking the element $\phi$ under the map induced by the disk $D$.}\label{fig:main-phi-calc}
\end{figure}

\clearpage
On the other hand, the movie for the concordance associated to $D'$ begins with a pair of saddle corresponding to the bands $b_1'$ and $b_2'$ in Figure~\ref{fig:main-phi}. These saddle moves induce chain maps that merge distinct $x$-labeled circles in $\phi$, hence $\khr(D')$ maps $\phi$ to zero. \end{proof}

We now lift this result to Bar-Natan's theory, where we may deduce more by considering  the role of nontrivial $H$-torsion:

\begin{proposition} \label{prop:hooked-disks}
The stabilized disks induce distinct maps  
$\, H\cdot \bnr(D)  \neq H \cdot \bnr(D' )$.
\end{proposition}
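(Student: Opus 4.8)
As outlined in \S\ref{subsec:obstructions}, the plan is to combine Lemma~\ref{lem:stab} with the structure theorem (Theorem~\ref{thm:structure}) and a machine computation. By Lemma~\ref{lem:stab}, one (orientation-preserving) internal stabilization multiplies an induced cobordism map by $H$ over $\F_2[H]$, so the stabilized disks induce the maps $H\cdot\bnr(D)$ and $H\cdot\bnr(D')$; passing to the punctured, mirrored cobordisms (Remark~\ref{rem:puncture} and \S\ref{subsec:obstructions}), the claim becomes $H\cdot\delta\neq 0$ in $\bnr(-K)$, where $\delta=\bnr(-D)(1)-\bnr(-D')(1)$ and $\bnr(-U)=\F_2[H]\langle 1\rangle$. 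Proposition~\ref{prop:vanilla} already gives $\delta\neq 0$: its proof is carried out in reduced Khovanov homology, so by the homology-level form of the square~\eqref{eq:project} the image of $\delta$ in $\bnr(-K)/H\cong\khr(-K)$ is the class $\khr(-D)(1)-\khr(-D')(1)$ computed there, which is nonzero because it pairs nontrivially with the cycle $\phi$ of Figure~\ref{fig:main-phi}. In particular $\delta$ has a well-defined bigrading $(h_\delta,q_\delta)$, namely the one determined by that calculation.

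Next I would feed this into Theorem~\ref{thm:structure}. Write $\bnr(-K)$ as a free tower $h^0q^{s}\,\F_2[H]$ together with $H$-torsion summands $h^{a_i}q^{b_i}\,\F_2[H]/(H^{m_i})$. An element $y\in\bnr(-K)_{(h_\delta,q_\delta)}$ has $H\cdot y=0$ if and only if its component in the tower vanishes and each of its torsion components lies in the socle $H^{m_i-1}\,\F_2[H]/(H^{m_i})$; equivalently, multiplication by $H$ fails to be injective on $\bnr(-K)_{(h_\delta,q_\delta)}$ precisely when some summand satisfies $a_i=h_\delta$ and $b_i-2(m_i-1)=q_\delta$. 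Granting that no torsion summand of $\bnr(-K)$ has its socle in bigrading $(h_\delta,q_\delta)$, we conclude that $y\mapsto H\cdot y$ is injective on $\bnr(-K)_{(h_\delta,q_\delta)}$, and since $\delta\neq 0$ this gives $H\cdot\delta\neq 0$, proving the proposition.

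The remaining ingredient is a direct computation of $\bnr(-K)$ over $\F_2[H]$, which I would carry out with \texttt{khoca} from a diagram for $K$ such as the one in Figure~\ref{fig:main-phi}: one reads off the $H$-torsion summands together with their socle bigradings and checks that none of them lands in $(h_\delta,q_\delta)$ --- equivalently, that \emph{every} nonzero element of $\bnr(-K)$ in the bigrading of $\delta$ survives multiplication by $H$. This is the step I expect to be the main obstacle. The knot $K$ is assembled from two interlocked positron disks, so its diagram is not small and the computation is substantial; one must also be careful to match \texttt{khoca}'s grading and mirroring conventions with those of Theorem~\ref{thm:structure} and to confirm which bigrading $\delta$ occupies. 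Were the clean injectivity statement to fail in that bigrading, the argument would have to be refined --- locating $\delta$ precisely inside the torsion part of $\bnr(-K)$ and showing it avoids the relevant socle --- but the expectation, consistent with the relative scarcity of $H$-torsion in Khovanov homology discussed in \S\ref{subsec:compare}, is that $\bnr(-K)_{(h_\delta,q_\delta)}$ is $H$-torsion-free, so the short argument above closes the proof.
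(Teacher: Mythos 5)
Your argument follows the paper's proof essentially step for step: Lemma~\ref{lem:stab} reduces the claim to $H\cdot\delta\neq 0$ for $\delta=\bnr(-D)(\xi)-\bnr(-D')(\xi)$ in bigrading $(0,0)$, Proposition~\ref{prop:vanilla} together with the square~\eqref{eq:project} gives $\delta\neq 0$, and the remaining point is that no $H$-torsion socle of $\bnr(-K)$ lives in that bigrading. The computation you defer is exactly what the paper carries out (via \textsl{KnotJob}'s Bar-Natan--Lee--Turner spectral sequence, Table~\ref{table:main-ss}, rather than a direct \texttt{khoca} computation over $\F_2[H]$): it yields $\bnr(-K)\cong h^0q^0\,\F_2[H]\oplus h^0q^0\,\F_2[H]/H^2\oplus M$ with $M$ supported away from $(0,0)$, so the torsion socle sits in $(0,-2)$ and every nonzero element in bigrading $(0,0)$ survives multiplication by $H$, confirming your expectation.
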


\begin{proof}
For technical convenience, we will work with  the mirrored, time-reversed disks $-D$ and $-D'$ bounded by $-K$. (Diagrammatically, we take this reflection to be through the plane of the page.)  This will be sufficient to prove the claim, since the maps induced by these disks are dual to the ones induced by $D$ and $D'$ (in an appropriate sense, as discussed in \cite[\S7.3]{khovanov00} for Khovanov's original theory or \cite[Lemma~6.5]{lipshitz-sarkar:mixed} for the Bar~Natan setting). In particular, by Proposition~\ref{prop:vanilla}, we have $\khr(-D)\neq \khr(-D')$.

We lift this to Bar-Natan's theory. To that end, let $\xi\approx H^0$ denote the generator of $\bnr(-U) \cong \F_2[H]$. Since $\pi_*(\xi)$ generates $\khr(-U)$, Proposition~\ref{prop:vanilla} implies  $$\khr(-D)(\pi_*(\xi))-\khr(-D')(\pi_*(\xi))\neq 0.$$ The commutative diagram on homology induced by \eqref{eq:project} then implies 
$$\pi_*\left(\bnr(-D)(\xi)-\bnr(-D')(\xi)\right)\neq 0 \implies  \delta := \bnr(-D)(\xi)-\bnr(-D')(\xi) \neq 0.$$
\noindent By Lemma~\ref{lem:stab}, stabilization has the effect of multiplication by $H$. Therefore, to distinguish the maps induced by these stabilized disks, it suffices to show that $H\cdot \delta$ is nonzero.  
  To that end, we use the program \textsl{KnotJob} \cite{knotjob}, to compute $\khr(-K)$ and the Bar-Natan--Lee--Turner spectral sequence,  entering $-K$ using its Dowker-Thistlethwaite code:
    \begin{center}\tt \footnotesize
  -28 -84 14 -88 -16 4 -24 86 -78 -76 44 82 -12 -2 -92 -64 54 -52 72 -56 -34 68 20 \\ -74 -38 60 -42 -62 50 -70 40 -32 -30 90 36 -58 -48 -46 -18 8 -6 26 -10 -80 22 66
  \end{center}


The relevant portion of the spectral sequence is shown in  Table~\ref{table:main-ss}. Observe that  $\khr(-K)$ has rank two in bigrading $(0,0)$ --- indeed, it is generated by the elements $\khr(-D)(\pi_*(\xi))$ and $\khr(-D')(\pi_*(\xi))$. All nonzero elements in bigrading $(0,0)$ survive to the second page of the spectral sequence, and we can see that   $\bnr(-K)$ must have the form $h^0q^0\, \F_2[H] \,  \oplus \,  h^0q^0\, \F_2[H]/H^2 \, \oplus \, M$, where  $M$ is supported in bigradings $(h,q)\neq (0,0)$. It follows that all nonzero elements of $\bnr(-K)$ in bigrading $(0,0)$ survive multiplication by $H$, hence the stabilized disks induce distinct maps on Bar-Natan homology.
\end{proof}

\begin{table}\tiny
\centering
\setlength\extrarowheight{2pt}
\begin{tabular}{|c||cc|>{\centering}m{.02\textwidth}|>{\centering}m{.02\textwidth}|>{\centering}m{.02\textwidth}|>{\centering}m{.02\textwidth}|>{\centering}m{.02\textwidth}|>{\centering}m{.02\textwidth}|c|}
 \arrayrulecolor{black}
\multicolumn{10}{c}{Page 1} \smallskip \\
\cline{1-10}  
\color{black}\backslashbox{\!$q$\!}{\!$h$\!} &  $\hdots$   &\hspace{-8pt} {\color{black}{\vrule}} \hspace{-2pt} \color{black}$-4$ \hspace{-5pt} & \color{black}$-3$ & \color{black}$-2$ & \color{black}$-1$ & \color{black}$0$ & \color{black}$1$ & \color{black}$2$ & \color{black}$3$ \\
\hhline{=||=========}
$2$     &   &   &   &   &   &   &   &   & $2$ \\ 
\hhline{-||~--------}
$0$     &   &   &   &   &   & \cellcolor{cellgray} 2 &   &  3 &   \\
\hhline{-||~--------}
$-2$     &   &   &   &   &   &  2 & $6$ & $ $ &   \\
\hhline{-||~--------}
$-4$     &   &   &   & 2   &   13 & $14$ &   &   &   \\
\hhline{-||~--------}
$-6$     &   &   &    4 &  24 & 19 &  &   &   &   \\
\hhline{-||~--------}
$-8$  &   & 13  & 44   & 24 &  &   &   &   &   \\
\hhline{-||~--------}
$-10$     &   & 75   & 28 &   &   &   &   &   &   \\
\hhline{-||~--------}
$-12$    &   & 26&  &   &   &   &   &   &   \\
\hhline{-||}
$\vdots$ & \  \reflectbox{$\ddots$}
\\
\end{tabular}\hfill
\begin{tabular}{|c||cc|>{\centering}m{.02\textwidth}|>{\centering}m{.02\textwidth}|>{\centering}m{.02\textwidth}|>{\centering}m{.02\textwidth}|>{\centering}m{.02\textwidth}|>{\centering}m{.02\textwidth}|c|}
\multicolumn{10}{c}{}\\
\multicolumn{10}{c}{Page 2} \smallskip \\
\cline{1-10}  
\color{black}\backslashbox{\!$q$\!}{\!$h$\!} &  $\hdots$   &\hspace{-8pt} {\color{black}{\vrule}} \hspace{-2pt} \color{black}$-4$ \hspace{-5pt} & \color{black}$-3$ & \color{black}$-2$ & \color{black}$-1$ & \color{black}$0$ & \color{black}$1$ & \color{black}$2$ & \color{black}$3$ \\
\hhline{=||=========}
$2$     &   &   &   &   &   &   &   &   & \\
\hhline{-||~--------}
$0$     &   &   &   &   &   & \cellcolor{cellgray}  2 &   &   &   \\
\hhline{-||~--------}
$-2$     &   &   &   &   &   &   &   &   &   \\
\hhline{-||~--------}
$-4$     &   &   &   &   &  1 &   &   &   &   \\
\hhline{-||~--------}
$-6$     &   &   &   &   &   &   &   &   &   \\
\hhline{-||~--------}
$-8$     &   &   &   &  &   &   &   &   &   \\
\hhline{-||~--------}
$-10$     &   &   &   &   &   &   &   &   &   \\
\hhline{-||~--------}
$-12$     &   &   &   &   &   &   &   &   &   \\
\hhline{-||}
$\vdots$ & \ \reflectbox{$\ddots$} \\
\multicolumn{10}{c}{}\\
\end{tabular}

\vspace{-10pt}

\captionsetup{width=.95\linewidth}
\caption{The first two pages of the reduced Bar-Natan--Lee--Turner spectral sequence for the knot $K$ from Figure~\ref{fig:main-phi}, shown for  $h \geq -4$ and $q \geq -12$.} 
\label{table:main-ss}
\end{table}

\begin{figure}[b]
\center
    \labellist

\pinlabel {\footnotesize (a)} at 220 -25
\pinlabel {\footnotesize (b)} at 845 -25

\endlabellist
\includegraphics[width=.925\linewidth]{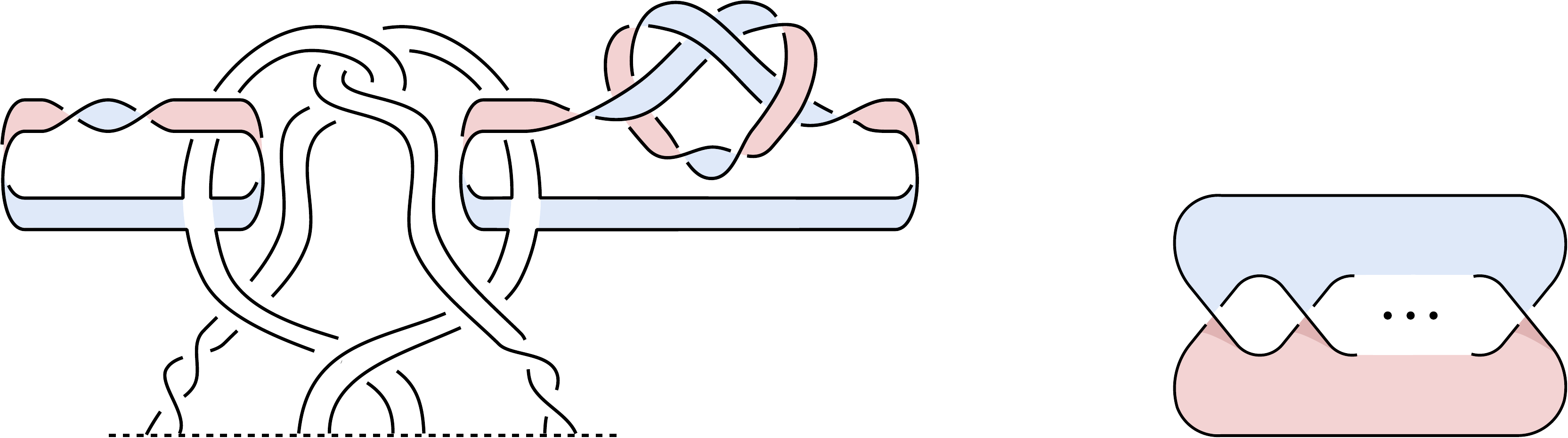}

\vspace{7.5pt}
\captionsetup{width=.925\linewidth}

\caption{(a) Adding a pair of bands to $K$. (b) The standard Seifert surface for  $T_{2,k}$}\label{fig:enlarged}
\vspace{-7.5pt}
\end{figure}

Before proceeding to the proof of Theorem~\ref{thm:stab}, we emphasize that the disks $D$ and $D'$ are \emph{not} topologically isotopic rel boundary. In particular, their associated knot groups $\pi_1(B^4 \setminus D)$ and $\pi_1(B^4 \setminus D')$ are \emph{not} infinite cyclic, and the disks can be distinguished (rel boundary) by the so-called  \emph{peripheral maps} on $\pi_1$ induced by including $S^3 \setminus K$ into the disk complements. Though we expect that variations on our constructions should yield exotic pairs of disks, we content ourselves with producing exotic surfaces of positive genus by attaching bands that eliminate the obstruction to topological isotopy (rel boundary).

Let $F$ and $F'$ denote the genus-1 surfaces in $B^4$ obtained from $D$ and $D'$, respectively, by attaching bands to $K$ as shown in Figure~\ref{fig:enlarged}(a). We will see that $F$ and $F'$ are topologically isotopic rel boundary, yet the obstruction to smooth isotopy persists (even after an internal stabilization). To extend this to an infinite family of surfaces with increasing genus, we set $F_1=F$ and $F_1'=F'$, then let $F_g$ and $F_g'$ be the boundary connected sums of $F_1$ and $F_1'$ with the fiber surface (of genus $g-1$) for the torus knot $T_{2,2g-1}$ depicted in Figure~\ref{fig:enlarged}(b).

 To set convenient notation for the knots that bound these surfaces, we let $K_i = \partial F_i = \partial F_i'$.

\begin{theorem}\label{thm:genus}
Fix an integer $g \geq 1$ and let $F_g,F_g' \subset B^4$ be the surfaces defined above. 
\vspace{-3pt}

\begin{enumerate}[label=\normalfont \text{(\alph*)}]
\item \hypertarget{thm:genus(a)} The surfaces $F_g$ and $F_g'$ induce distinct maps on Bar-Natan homology, and they remain distinct after one internal stabilization.

\vspace{-4pt}

\item \hypertarget{thm:genus(b)}  The surfaces $F_g$ and $F_g'$  are topologically isotopic rel boundary in $B^4$, hence so are the surfaces obtained from them by internal stabilization. 
\end{enumerate}
\end{theorem}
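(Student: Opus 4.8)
\emph{Proof strategy.} The plan for (a) is to bootstrap from the disk case of Propositions~\ref{prop:vanilla} and~\ref{prop:hooked-disks}, first reducing to $g=1$. Since $F_g = F_1\,\natural\, S$ and $F_g' = F_1'\,\natural\, S$, where $S$ is the pushed‑in genus‑$(g-1)$ fiber surface of $T_{2,2g-1}$ (the \emph{same} surface for both), the connected‑sum formula gives an isomorphism $\bnr(-K_g)\cong\bnr(-K_1)\otimes_{\F_2[H]}\bnr(-T_{2,2g-1})$ under which $\bnr(-F_g)(\xi)=\bnr(-F_1)(\xi)\otimes\bnr(-S)(\xi)$, where $\xi$ generates $\bnr(-U)\cong\F_2[H]$. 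As $-T_{2,2g-1}$ is a negative torus knot whose slice genus is realized by $S$, the class $\bnr(-S)(\xi)$ is a non‑torsion generator of a free $\F_2[H]$‑summand, so tensoring with it is split injective; hence the difference class $\delta_g=\bnr(-F_g)(\xi)-\bnr(-F_g')(\xi)$ equals $\delta_1\otimes\bnr(-S)(\xi)$, and $H^k\cdot\delta_1\neq0 \Rightarrow H^k\cdot\delta_g\neq0$ for $k=0,1$. It therefore suffices to prove $\delta_1\neq0$ and $H\cdot\delta_1\neq0$ in $\bnr(-K_1)$.

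For $g=1$, write $\Sigma\colon K\to K_1$ for the two‑band cobordism of Figure~\ref{fig:enlarged}(a), so that $F_1=\Sigma\circ D$, $F_1'=\Sigma\circ D'$, and hence $\delta_1=\bnr(-\Sigma)(\delta)$ for the class $\delta\in\bnr(-K)$ of Proposition~\ref{prop:hooked-disks}. To see $\delta_1\neq0$ I would extend the movie computation of Figure~\ref{fig:main-phi-calc} across the two bands of $\Sigma$ and track $\pi_*\delta$ through the induced map on reduced Khovanov homology, checking that $\ckhr(-\Sigma)(\pi_*\delta)\neq0$ (this is the sense in which the bands ``preserve the Khovanov obstruction''); the commutative square~\eqref{eq:project} then forces $\bnr(-\Sigma)(\delta)\neq0$. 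For the factor of $H$, I would compute $\bnr(-K_1)$ with \textsl{KnotJob} \cite{knotjob} and inspect the Bar‑Natan--Lee--Turner spectral sequence in the bigrading of $\delta_1$, exactly as in Table~\ref{table:main-ss}, aiming to show that this bigrading carries no order‑$1$ $H$‑torsion summand --- equivalently, that every nonzero class there survives multiplication by $H$ --- which yields $H\cdot\delta_1\neq0$. The delicate point of (a) is this last verification: guaranteeing that passing from the disk to the genus‑$1$ surface does not drop the $H$‑torsion order of the difference class below $2$. This is a computer‑assisted bookkeeping exercise directly parallel to the proof of Proposition~\ref{prop:hooked-disks}.

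For (b) with $g=1$, the plan is to apply the Conway--Powell classification of surfaces with infinite cyclic knot group (Theorem~\ref{thm:cp}, \cite{conway-powell}) to $F=F_1$ and $F'=F_1'$. Two inputs are required. First one checks that the bands of Figure~\ref{fig:enlarged}(a) kill exactly the extra generators of $\pi_1(B^4\setminus D)$ and $\pi_1(B^4\setminus D')$ detected by the peripheral maps, so that $\pi_1(B^4\setminus F)\cong\pi_1(B^4\setminus F')\cong\Z$; this is a handle‑diagram and Wirtinger computation. Second --- the technical heart --- one computes the equivariant intersection forms of $F$ and $F'$ on $H_2$ of the infinite‑cyclic covers of the exteriors (from Seifert‑type matrices, following \cite{conway-powell}) and exhibits an isometry over $\Z[t^{\pm1}]$; together with the (evidently matching) remaining invariants appearing in Theorem~\ref{thm:cp}, the theorem delivers a topological isotopy rel boundary. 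I expect this equivariant intersection form calculation to be the principal new difficulty of the theorem. For $g>1$ nothing further is needed: $F_g=F_1\,\natural\,S$ and $F_g'=F_1'\,\natural\,S$ with the same $S$, so a topological isotopy rel boundary carrying $F_1$ to $F_1'$ extends by the identity over the $S$‑summand to one carrying $F_g$ to $F_g'$. Finally, an internal stabilization is performed inside a standard $3$‑dimensional $1$‑handle in the complement, hence carries topologically isotopic surfaces to topologically isotopic surfaces, giving the last clause of the statement.
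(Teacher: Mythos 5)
Your proposal is correct and runs on essentially the same engine as the paper: part (b) is the paper's argument almost verbatim (reduce to $g=1$, check $\pi_1\cong\Z$ for the exteriors, compute and match the equivariant intersection forms, apply Conway--Powell \cite{conway-powell}, and observe that $F_g,F_g'$ differ from $F_1,F_1'$ by a common set of bands, and that stabilization preserves topological isotopy), and part (a) uses the same ingredients: an explicit Khovanov-level computation lifted through \eqref{eq:project}, a \textsl{KnotJob} \cite{knotjob} computation of the Bar-Natan--Lee--Turner spectral sequence in the relevant bigrading, and, for $g\ge2$, the K\"unneth formula together with the computation of $\bnr(-T_{2,2g-1})$. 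Two spots where your packaging differs and needs a bit more care. First, for $g=1$ you propose to push $\pi_*\delta$ forward and check $\khr(-\Sigma)(\pi_*\delta)\neq0$; but you have no explicit representative of $\pi_*\delta$ in hand, and nonvanishing in the large group $\khr(-K_1)$ is not a chain-level verification. The workable version (and the paper's) is dual: exhibit an explicit cycle $\phi_1\in\khr(K_1)$ (Figure~\ref{fig:enlarged-phi}) carried to $\phi$ by the reversed band cobordism, so that $\khr(F_1)(\phi_1)\neq\khr(F_1')(\phi_1)$ in $\khr(U)\cong\F_2$, and then invoke duality exactly as in Proposition~\ref{prop:hooked-disks}. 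Second, your reduction of $g\ge2$ to $g=1$ via $\delta_g=\delta_1\otimes\bnr(-S)(\xi)$ is a legitimate shortcut the paper does not take, but it requires (i) functoriality of the connected-sum equivalence under boundary connected sum of cobordisms, (ii) the Tor terms in the K\"unneth formula over $\F_2[H]$ (harmless, since they only contribute extra direct summands, so split injectivity of tensoring with the tower survives), and (iii) the identification of $\bnr(-S)(\xi)$ with the tower generator, which follows from the $q$-degree shift $\chi=2-2g$ together with the facts that the bigraded piece $(0,2-2g)$ of $\bnr(-T_{2,2g-1})$ is spanned by the tower generator and that the map of a connected cobordism is nonzero after inverting $H$ --- not merely from ``the slice genus is realized.'' The paper instead reruns the $\phi_g$ computation for every $g$ and uses the K\"unneth formula only to pin down the bigraded piece of $\bnr(-K_g)$ in bigrading $(0,-2g)$; the two arguments carry the same content, and with the fixes above yours goes through.
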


This theorem implies that $F_g$ and $F_g'$ are exotically knotted (rel boundary) and remain so after an internal stabilization, which will establish Theorem~\ref{thm:stab}. The techniques used to establish parts (a) and (b) of Theorem~\ref{thm:genus} are rather different, so we split up the proof.

\begin{proof}[Proof of Theorem~\ref{thm:genus}\hyperlink{thm:genus(a)}{(a)}]
We begin by distinguishing  $\khr(F_g)$ and $\khr(F'_g)$,  which we view as maps $\khr(K_g)\to \khr(U)\cong \F_2$. We will distinguish these using a chosen cycle  $\phi_g \in \khr(K_g)=\khr(K_1 \# T_{2,2g+1})$. This cycle $\phi_g$ is depicted  on the righthand side of Figure~\ref{fig:enlarged-phi}. We note that it lies in bigrading $(0,2g)$. 

The surfaces $F_g$ and $F_g'$ are obtained from $D$ and $D'$ by attaching a genus-$g$ cobordism from $K$ to $K_g$. Viewed from $K_g$ to $K$, this initial cobordism can be expressed through a movie of diagrams beginning with $2g$ saddle moves (corresponding to the bands depicted on the left side of Figure~\ref{fig:enlarged-phi}) followed by a sequence of Reidemeister I and II moves. A straightforward calculation shows that the associated sequence of elementary cobordism maps takes $\phi_g$ to the element $\phi$ from Figure~\ref{fig:main-phi}. Composing this initial cobordism map $\khr(K_g)\to \khr(K)$ with the cobordism maps $\khr(K)\to  \khr(U)\cong \F_2$ induced by $D$ and $D'$, we conclude that $\khr(F_g)(\phi_g)=\khr(D)(\phi)$ generates $\khr(U) \cong \F_2$, whereas $\khr(F_g')(\phi_g)=\khr(D')(\phi)=0$.

\begin{figure}
\center

\def\svgwidth{\linewidth}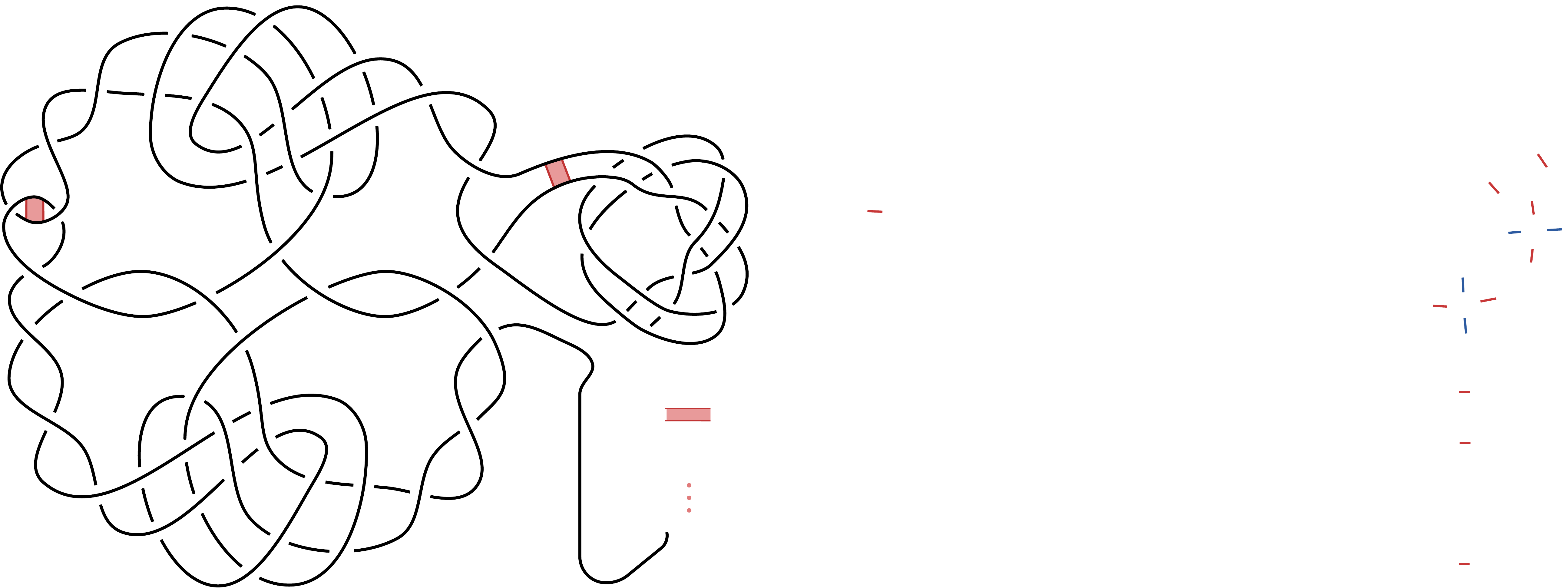
\captionsetup{width=.925\linewidth}

\vspace{1pt}

\caption{A simplified diagram of the knot $K_g$, together with a cycle $\phi_g \in \khr(K_g)$.}\label{fig:enlarged-phi}
\end{figure}

We next lift this to Bar-Natan's theory so that we may compare the maps induced by the stabilized surfaces. Just as in the proof of Proposition~\ref{prop:hooked-disks}, it will be convenient to work with the mirrored, time-reversed surfaces. That is, we consider the surfaces $-F_g$ and $-F_g'$, viewed as cobordisms from $-U$ to $-K_g$. Our argument for distinguishing the cobordism maps induced by the stabilizations of $-F_g$ and $-F_g'$  is formally identical to the analogous argument in Proposition~\ref{prop:hooked-disks} used to distinguish the stabilizations of of $-D$ and $-D'$, so we only highlight the changes.

For $g=1$, the knot $K_1$ has the following Dowker-Thistlethwaite code:

\begin{center}
\begin{minipage}{.93\linewidth}\tt \small
-8,58,-128,66,-4,56,54,52,124,-24,42,-28,122,-20,46,38,36,116,32,30,22,-120,
18,-40,-100,126,14,12,-64,2,-68,10,-60,6,98,96,88,-106,-118,114,112,110,-90,
74,-94,108,-86,104,70,-50,-76,84,-72,92,-102,82,80,34,-78,
-48,26,-44,16,62
\end{minipage}
\end{center}

Using \textsl{KnotJob} \cite{knotjob},  we compute $\khr(-K_1)$ and the Bar-Natan--Lee--Turner spectral sequence. The relevant portion of the spectral sequence is shown in  Table~\ref{table:enlarged}. Observe that  $\khr(-K_1)$ has rank two in bigrading $(0,-2)$ --- indeed, it is spanned by the image of $\khr(-F_1)$ and $\khr(-F_1')$. All nonzero elements in bigrading $(0,-2)$ are seen to survive to the second page of the spectral sequence, so we can see that   $\bnr(-K_1)$ must have the form $h^0q^{-2}\, \F_2[H] \,  \oplus \,  h^0q^{-2}\, \F_2[H]/H^2 \, \oplus \, M$, where  $M$ is supported in bigradings $(h,q) \neq (0,-2)$.  It follows that all nonzero elements of $\bnr(-K_1)$ in bigrading $(0,0)$ survive multiplication by $H$. In particular, we conclude that the stabilized surfaces induce distinct maps.


\begin{table}\tiny
\centering
\setlength\extrarowheight{2pt}
\begin{tabular}{|c||cc|>{\centering}m{.02\textwidth}|>{\centering}m{.02\textwidth}|>{\centering}m{.02\textwidth}|>{\centering}m{.02\textwidth}|>{\centering}m{.02\textwidth}|>{\centering}m{.02\textwidth}|c|}
 \arrayrulecolor{black}
\multicolumn{10}{c}{Page 1} \smallskip \\
\cline{1-10}  
\color{black}\backslashbox{\!$q$\!}{\!$h$\!} &  $\hdots$   &\hspace{-8pt} {\color{black}{\vrule}} \hspace{-2pt} \color{black}$-4$ \hspace{-5pt} & \color{black}$-3$ & \color{black}$-2$ & \color{black}$-1$ & \color{black}$0$ & \color{black}$1$ & \color{black}$2$ & \color{black}$3$ \\
\hhline{=||=========}
$0$     &   &   &   &   &   &   &   &   & $2$ \\ 
\hhline{-||~--------}
$-2$     &   &   &   &   &   & \cellcolor{cellgray} 2 &   &  6 &  1 \\
\hhline{-||~--------}
$-4$     &   &   &   &   & 3  &  3 & 14 & 3  &   \\
\hhline{-||~--------}
$-6$     &   &   &   & 7    &  18  & 33  &   7 &   &   \\
\hhline{-||~--------}
$-8$     &   &   &   13 &  54 & 70& 16 &   &   &   \\
\hhline{-||~--------}
$-10$  &   & 32  &124   & 129 & 33 &   &   &   &   \\
\hhline{-||~--------}
$-12$     &   & 250   & 219 &  62 &   &   &   &   &   \\
\hhline{-||~--------}
$-14$    &   & 348 & 100  &   &   &   &   &   &   \\
\hhline{-||}
$\vdots$ & \  \reflectbox{$\ddots$}
\\
\end{tabular}\hfill
\begin{tabular}{|c||cc|>{\centering}m{.02\textwidth}|>{\centering}m{.02\textwidth}|>{\centering}m{.02\textwidth}|>{\centering}m{.02\textwidth}|>{\centering}m{.02\textwidth}|>{\centering}m{.02\textwidth}|c|}
\multicolumn{10}{c}{}\\
\multicolumn{10}{c}{Page 2} \smallskip \\
\cline{1-10}  
\color{black}\backslashbox{\!$q$\!}{\!$h$\!} &  $\hdots$   &\hspace{-8pt} {\color{black}{\vrule}} \hspace{-2pt} \color{black}$-4$ \hspace{-5pt} & \color{black}$-3$ & \color{black}$-2$ & \color{black}$-1$ & \color{black}$0$ & \color{black}$1$ & \color{black}$2$ & \color{black}$3$ \\
\hhline{=||=========}
$0$     &   &   &   &   &   &   &   &   & \\
\hhline{-||~--------}
$-2$     &   &   &   &   &   & \cellcolor{cellgray}  2 &   &   &   \\
\hhline{-||~--------}
$-4$     &   &   &   &   &   &   &   &   &   \\
\hhline{-||~--------}
$-6$     &   &   &   &   &  1 &   &   &   &   \\
\hhline{-||~--------}
$-8$     &   &   &   &   &   &   &   &   &   \\
\hhline{-||~--------}
$-10$     &   &   &   &  &   &   &   &   &   \\
\hhline{-||~--------}
$-12$     &   &   &   &   &   &   &   &   &   \\
\hhline{-||~--------}
$-14$     &   &   &   &   &   &   &   &   &   \\
\hhline{-||}
$\vdots$ & \ \reflectbox{$\ddots$} \\
\multicolumn{10}{c}{}\\
\end{tabular}

\vspace{-10pt}

\captionsetup{width=.975\linewidth}
\caption{The first two pages of the reduced Bar-Natan--Lee--Turner spectral sequence for the knot $K$ from Figure~\ref{fig:enlarged-phi}, shown for $h \geq -4$ and $q \geq -14$.}
\label{table:enlarged}
\end{table}

Next we extend this to higher genus ($g\geq 2$), which requires a partial calculation of $\bnr(-K_g)=\bnr(-K_1 \smallsum - \! T_{2,2g-1})$.  Since the torus knots $T_{2,k}$ are alternating, it is simple to compute their reduced homology and associated spectral sequence using the Jones polynomial. In particular, for odd $k>0$, the negative torus knot $-T_{2,k}$ satisfies
$$\bnr(-T_{2,k}) \cong h^0 q^{1-k} \, \F_2[H] \ \boldsymbol{\oplus} \left( \overset{-2}{\underset{i\, =\, -k}{\bigoplus}} h^i q^{-k+2i+1} \, \F_2[H]/H \right).$$

\vspace{-5pt}

Thus, for $k=2g-1$ with $g \geq 2$, we have
$$\bnr(-T_{2,2g-1}) \cong h^0 q^{2-2g} \, \F_2[H] \ \boldsymbol{\oplus} \left( \overset{-2}{\underset{i\, =\, 1-2g}
{\bigoplus}} h^i q^{2-2g+2i} \, \F_2[H]/H \right)$$

\vspace{-5pt}

Note that $\bnr(-T_{2,2g-1})$ is supported in ``delta-grading'' $d:=q-2h=2-2g$. By the K\"unneth formula for reduced Bar-Natan homology under connected sums, 
we have
$$\bnr^{(0,-2g)}(-K_g) \cong \bigoplus_{j=-\infty}^\infty \left( \bnr^{(j,\,-2g+j)}(-K_1) \otimes_{\F_2} \bnr^{(-j, \, 2-2g-j)}(-T_{2,2g-1}) \right) $$

Since $\bnr(-T_{2,2g-1})$ is supported in bigradings $(h,q)\leq (0,2-2g)$, it is easy to see that the only contributions to the summand of $\bnr(-K_1 \smallsum -\! T_{2,2g-1})$ in bigrading $(h,q)=(0,-2g)$ come from the  summands of $\bnr(-K_1)$ and $\bnr(-T_{2,2g-1})$ in bigradings $(0,-2)$ and $(0,2-2g)$, respectively. 

In particular, the $\F_2$-submodule $\bnr^{(0,-2g)}(-K_g)$ spans a $\F_2[H]$-submodule of $\bnr(-K_g)$ that is isomorphic to \smallskip
$$\text{ $\left(h^0q^{-2}\, \F_2[H]  \oplus h^0q^{-2}\, \F_2[H]/H^2\right) \otimes  h^0 q^{2-2g} \, \F_2[H]  \ \, \cong \, \ h^0 q^{-2g} \, \F_2[H]  \oplus   h^0q^{-2g}\, \F_2[H]/H^2$}$$

It follows that  all nonzero elements in bigrading $(h,q)=(0,-2g)$  survive multiplication by $H$, hence the surfaces remain distinct after a single stabilization.
\end{proof}

To prove Theorem~\ref{thm:genus}\hyperlink{thm:genus(b)}{(b)}, we will need to study the \emph{equivariant intersection forms} associated to our surfaces --- that is, the intersection form on the infinite cyclic cover of the surface exterior in $B^4$. Though we recall some details below,  we point the reader to \cite{milnor:infinite} and \cite{conway-powell} for the relevant background.

To set this up, let $S \subset B^4$ denote a connected, properly embedded surface whose exterior $X = B^4 \setminus \mathring{N}(S)$ has $\pi_1(X) \cong \Z$, and let $\smash{\widetilde X}$ denote the infinite cyclic cover of $X$. The group $H_2(\smash{\widetilde X}; \Z)$ can be viewed as a module over $\Lambda = \Z[t,t^{-1}]$; for convenience, we denote this by $H_2(X; \Lambda)$. The \emph{equivariant intersection form} $\lambda_X$ is the pairing
\vspace{-10pt}

\begin{align*}
\lambda_X: H_2(X; \Lambda) \times H_2(X;\Lambda) \to \Lambda \qquad \qquad
\lambda_X(\alpha,\beta) = \sum_k \left(\alpha \cdot t^k \beta\right) t^k,
\end{align*}

\vspace{-10pt}

where $\alpha \cdot t^k \beta$ is the signed intersection number between the homology class $\alpha$ and the (translated) homology class $t^k \beta$. When convenient, we will sometimes denote this by $\lambda_S$ instead of $\lambda_X$. We will use the following result from \cite[Theorem 7.5.(2)]{conway-powell}:


\begin{theorem}[Conway-Powell]\label{thm:cp}
Let $S,S' \subset B^4$ be properly embedded, genus-1 surfaces bounded by a knot with trivial Alexander polynomial $\Delta \equiv 1$. If $\lambda_S$ and $\lambda_{S'}$ are isometric, then $S$ and $S'$ are topologically isotopic rel boundary.
\end{theorem}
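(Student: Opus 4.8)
The plan is to run the standard topological surface-classification program: convert ``isotopic rel $\partial$'' into ``homeomorphic rel $\partial$ as pairs'', then into ``the surface exteriors are homeomorphic rel $\partial$'', and obtain the last statement by surgery theory over $\Lambda=\Z[t,t^{-1}]$, using that the equivariant intersection form is the controlling algebraic invariant. First I would record the bundle-theoretic reductions. Since $S,S'\subset B^4$ are properly embedded with $\partial S=\partial S'=K$, the tubular neighborhoods $\nu(S),\nu(S')$ are $2$-disk bundles over $\Sigma_{1,1}$ whose relative Euler numbers, measured against the Seifert framing of $K$, agree (being determined by $K$); hence a homeomorphism of exteriors $X=B^4\setminus\nu(S)\to X'=B^4\setminus\nu(S')$ that is the identity on $S^3\setminus\nu(K)$ and carries the meridian and Seifert-framed longitude of $S$ to those of $S'$ extends across the neighborhoods to a homeomorphism of pairs $(B^4,S)\to(B^4,S')$ rel $\partial$. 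Alexander's trick (the coning isotopy fixes $\partial B^4$, hence $K$) then turns such a homeomorphism into an ambient isotopy rel $\partial$ carrying $S$ to $S'$. So it suffices to construct the homeomorphism of exteriors rel $\partial$.

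Next I would set up the equivariant algebraic topology. Write $\widetilde X\to X$ for the infinite cyclic cover; the standing hypothesis (implicit in the definition of $\lambda_S$) is $\pi_1(X)\cong\pi_1(X')\cong\Z$, generated by a meridian, and $\Delta_K\equiv1$ then forces $H_1(X;\Lambda)=0$ and $H_2(X;\Lambda)\cong\Lambda^2$ free of rank $2g=2$, with $\lambda_X$ a nonsingular Hermitian form over $\Lambda$ (with respect to $t\mapsto t^{-1}$); likewise for $X'$. Thus, relative to its boundary, $X$ is a $\Lambda$-homology wedge of two $2$-spheres, and $\lambda_X$ is its complete invariant at the homotopy level --- exactly as in Boyer's simply-connected classification, but now over $\Lambda$. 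Concretely I would show $X$ is homotopy equivalent rel $\partial X$ to $\partial X$ with two $2$-cells attached, build a map $f\colon(X,\partial X)\to(X',\partial X)$ that is the identity on the boundary and realizes a $\Lambda$-chain isomorphism inducing the given isometry $\lambda_X\cong\lambda_{X'}$, and certify via the relative $\Lambda$-coefficient Whitehead theorem (and $\pi_1 f$ an isomorphism) that $f$ is a homotopy equivalence; since $\mathrm{Wh}(\Z)=0$ it is automatically simple, and one normalizes $f$ on the peripheral torus so that it respects meridians and Seifert-framed longitudes.

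The decisive step is surgery, and it is also where I expect the main obstacle. The homotopy equivalence $f$ rel $\partial$ is covered by a degree-one normal map with trivial normal invariant (surface exteriors in $B^4$ carry a preferred stable framing, agreeing on the common boundary), so it determines a degree-one normal cobordism $(W;X,X')$ rel $\partial$, which I want to surger rel $\partial$ to an $s$-cobordism. The only potential obstruction lies in $L_5(\Z[\Z])$, which by Shaneson's splitting is $L_5(\Z)\oplus L_4(\Z)\cong0\oplus\Z$; the crux of the argument --- and the technical heart of the whole theorem --- is to show that the surviving $\Z$ is detected by an Atiyah--Singer-type signature (a jump in the signature of the infinite cyclic cover, in the sense of Milnor's equivariant pairing \cite{milnor:infinite}) computed from the equivariant intersection form, so that the hypothesized isometry $\lambda_X\cong\lambda_{X'}$ forces the obstruction to vanish. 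Concretely one must understand the $L_5(\Z[\Z])$-action on the topological structure set of $X$ and pin it down in terms of $\lambda_X$; this is where the argument genuinely leaves the simply-connected world (where the corresponding $L_5(\Z)$ vanishes outright, as in the work of Boyer and of Freedman--Quinn), and Kreck's modified surgery is a natural framework in which to carry it out. Granting that the obstruction vanishes, $W$ surgers rel $\partial$ to an $s$-cobordism; since $\Z$ is a good group, Freedman's topological $s$-cobordism theorem yields a homeomorphism $X\cong X'$ rel $\partial$, and the first paragraph converts this into the desired isotopy rel boundary. A secondary, more routine burden is the homotopy-theoretic step above --- verifying that the two $2$-cells attach compatibly in $X$ and $X'$ and normalizing the boundary-fixing equivalence on the peripheral torus.
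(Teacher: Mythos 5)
First, a point of comparison: the paper does not prove this statement at all --- Theorem~\ref{thm:cp} is imported verbatim from Conway--Powell \cite{conway-powell} (their Theorem~7.5(2)) and used as a black box, so there is no in-paper argument to measure yours against. Judged against the actual source, your outline tracks the genuine strategy faithfully: reduce isotopy rel boundary to a homeomorphism of exteriors rel boundary (your Alexander-trick reduction is correct, and the normal bundles are trivial since the surfaces have nonempty boundary), identify the exteriors as having free $H_2(\,\cdot\,;\Lambda)$ with nonsingular equivariant intersection form (this is where $\Delta\equiv 1$ enters, trivializing the Blanchfield pairing), and then run topological surgery over the good group $\Z$.

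As a proof, however, your proposal has a gap exactly where the theorem's content lives, and you flag it yourself with ``granting that the obstruction vanishes.'' Two things are deferred rather than done: (i) the normal-invariant bookkeeping --- $[X/\partial X,\, G/TOP]$ has potential $H^2(X,\partial X;\Z/2)$ and $H^4(X,\partial X;\Z)$ contributions, and invoking ``a preferred stable framing'' does not by itself dispose of these; and (ii) the action of $L_5(\Z[\Z])\cong L_4(\Z)\cong\Z$ on the rel-boundary structure set of the exterior, which is precisely what separates the $\pi_1\cong\Z$ case from Boyer's simply-connected classification. Showing that this action is trivial on the relevant structure set (equivalently, that the isometry class of $\lambda$ is a complete invariant and the $L$-group does not act effectively) is the technical heart of \cite{conway-powell} and of the companion classification of $4$-manifolds with boundary and fundamental group $\Z$; it cannot be waved through with an appeal to an ``Atiyah--Singer-type signature'' without constructing that invariant and proving it detects the action. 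So: correct skeleton, honest about where the body is missing --- but the missing piece is the theorem.
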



\begin{proof}[Proof of Theorem~\ref{thm:genus}\hyperlink{thm:genus(b)}{(b)}]  It suffices to prove that the genus-1 surfaces $F_1$ and $F_1'$ are topologically isotopic rel boundary, since the higher-genus surface  $F_g$ and $F_g'$ with $g \geq 2$ are obtained from $F_1$ and $F_1'$ by attaching a common set of bands to $K_1=\partial F_1=\partial F_1'$.

\begin{figure}
\center
    \labellist

\pinlabel {(a)} at 230 740
\pinlabel {(b)} at 790 740
\pinlabel {(c)} at 140 -15
\pinlabel {(d)} at 570 -15
\pinlabel {(e)} at 1000 -15

\pinlabel {=} at 1071 1348
\pinlabel {=} at 1071 1145

\endlabellist
\vspace{10pt}

\includegraphics[width=\linewidth]{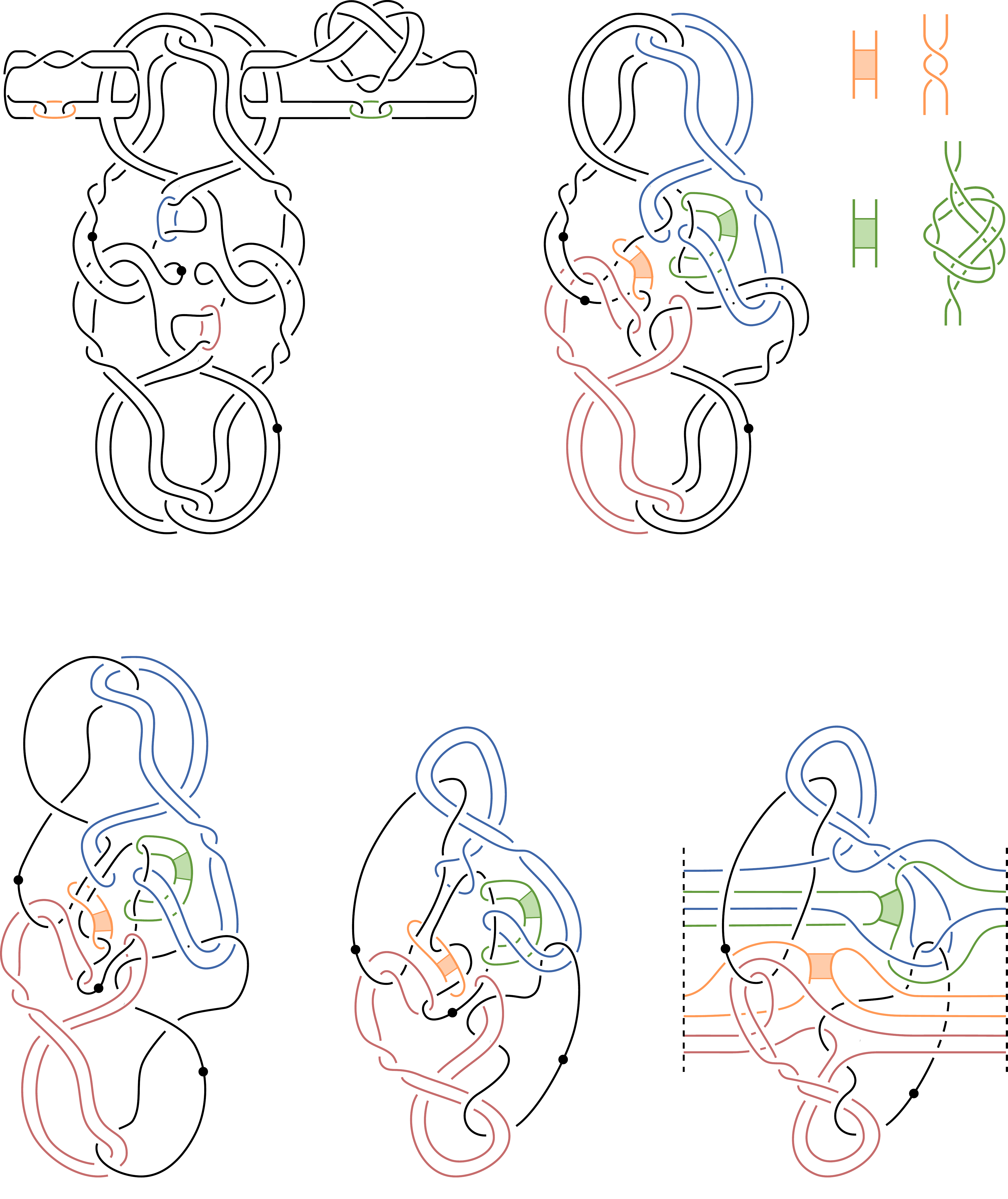}

\vspace{15pt}
\captionsetup{width=.975\linewidth}

\caption{Manipulating handle diagrams for the exterior of $F_1 \subset B^4$. (All undotted curves represent 0-framed 2-handles.) Beginning from part (a), we obtain (b) by simplifying the dotted components at the cost of complicating the 2-handle curves; the orange and green 2-handles have solid boxes where the strands are tied into a negative full twist and a 0-framed trefoil, respectively.  We slide the largest dotted curve over each of the two smaller ones to obtain (c). Further isotopy yields (d). In (e), we  cut open the diagram along the inner dotted component to construct a fundamental domain for the infinite cyclic cover of the surface exterior.}\label{fig:exterior}
\end{figure}

\begin{figure}
\center
\def\svgwidth{\linewidth}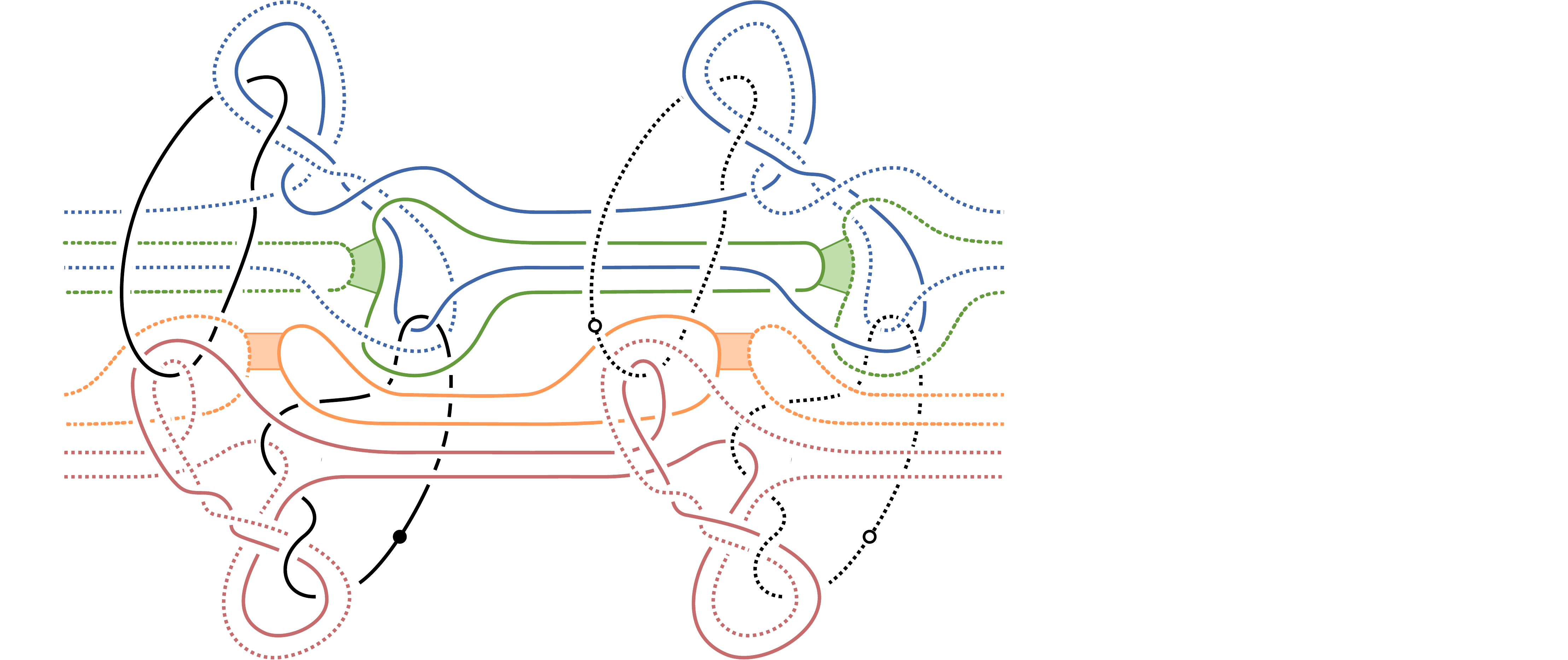
\caption{A handle diagram representing the infinite cyclic cover of the exterior of $F_1 \subset B^4$. Inside the orange and green boxes, the enclosed strands appear as shown on the righthand side of the diagram.}\label{fig:linking-form}
\end{figure}

 We begin by producing handle diagrams for the surface exterior $X=B^4 \setminus \mathring{N}(F_1)$ and its infinite cyclic cover $\smash{\widetilde X}$ in Figures~\ref{fig:exterior} and \ref{fig:linking-form}. (See the figure captions for some details.) It is straightforward to check that $\pi_1(X) \cong \Z$. The labels in Figure~\ref{fig:linking-form} will be used to describe a basis for $H_2(X;\Lambda)$ and a presentation for $\lambda_X$.
 
As an intermediate step, let $X_0$ denote the 4-manifold obtained from $X$ by surgering the belt spheres of the 1-handles corresponding to $\alpha$ and $\beta$, i.e., treating these as 0-framed 2-handles. Note that this gives rise to a pair of embedded 2-spheres of square zero in $X_0$, and $X$ is recovered from $X_0$ by surgering these spheres. A direct calculation (performed by calculating linking numbers in the diagram) shows that the equivariant intersection form on $H_2(\tilde X_0;\Lambda)\cong \Lambda^6$ is given by the following matrix $Q_0$: 

  \[Q_0 \ \ = \ \ \ 
\begin{blockarray}{ccccccc}
 & \text{\footnotesize\textcolor{gray}{$t^k \alpha$}} & \text{\footnotesize\textcolor{gray}{$t^k \beta$}} & \text{\footnotesize\textcolor{gray}{$t^k \gamma$}} & \text{\footnotesize\textcolor{gray}{$t^k \delta$}} & \text{\footnotesize\textcolor{gray}{$t^k \zeta$}} & \text{\footnotesize\textcolor{gray}{$t^k \eta$}} \smallskip \\ 
\begin{block}{c  [ cccccc]}
    \text{\footnotesize\textcolor{gray}{$\alpha$}} & \
     0 & 0  & \  -t^{-1}+ 1 \ &  1  &  0  & t^{-1}  \
    \\ 
    \text{\footnotesize\textcolor{gray}{$\beta$}} & \
      0 & 0  &  -1 & t^{-1}-1  &  0  & 0 \
    \\
    \text{\footnotesize\textcolor{gray}{$\gamma$}} & \
      1 -t & -1  & 0  & 0  & 0   & 0  \
  \\
  \text{\footnotesize\textcolor{gray}{$\delta$}} & \ 
      1 & -1+t  & 0  & 0  &  1-t  & 0   \
  \\
  \text{\footnotesize\textcolor{gray}{$\zeta$}} & \
      0 &  0 & 0  &  -t^{-1}+1 &    (t^{-1} -2+ t) &   0 \
  \\
  \text{\footnotesize\textcolor{gray}{$\eta$}} & \
      t & 0   & 0   & 0  &    0 & (0)  \ \smallskip
  \\
\end{block}
\end{blockarray}
 \]

\newpage

We change basis by replacing $\gamma$, $\delta$, $\zeta$, and $\eta$ with  $\gamma'=-\gamma+(t^{-1}-1)\eta$, \ $\delta'=(-t+1)\gamma-\delta+(2-t)\eta$, $\zeta'=(t-1)\gamma+\delta+\zeta+(t-2)\eta$ and  $\eta'=t^{-1}\eta$, respectively. The equivariant intersection form transforms as $Q_0 \rightsquigarrow P^T Q_0 \overline{P}$, where $P$ is the  change-of-basis matrix 

$$P=\left[
\begin{array}{cccccc}
 1 & 0 & 0 & 0 & 0 & 0 \\
 0 & 1 & 0 & 0 & 0 & 0 \\
 0 & 0 & -1 & 1-t & t-1 & 0 \\
 0 & 0 & 0 & -1 & 1 & 0 \\
 0 & 0 & 0 & 0 & 1 & 0 \\
 0 & 0 & t^{-1}-1 & 2-t & t-2 & t^{-1}\\
\end{array}
\right]$$

and $\overline{P}$ is obtained from $P$ by mapping $t \mapsto t^{-1}$. This yields
\vspace{-5pt}

 \[P^T Q \overline{P} \ \ = \ \  
\begin{blockarray}{ccccccc}
 & \text{\footnotesize\textcolor{gray}{$t^k \alpha$}} & \text{\footnotesize\textcolor{gray}{$t^k \beta$}} & \text{\footnotesize\textcolor{gray}{$t^k \gamma'$}} & \text{\footnotesize\textcolor{gray}{$t^k \delta'$}} & \text{\footnotesize\textcolor{gray}{$t^k \zeta'$}} & \text{\footnotesize\textcolor{gray}{$t^k \eta'$}} \smallskip \\ 
\begin{block}{c  [ cccccc]}
    \text{\footnotesize\textcolor{gray}{$\alpha$}} & \
     0 & 0  & 0  &  0  &  0  &1  \
    \\ 
    \text{\footnotesize\textcolor{gray}{$\beta$}} & \
      0 & 0  &  1 & 0  &  0  & 0 \
    \\
    \text{\footnotesize\textcolor{gray}{$\gamma'$}} & \
      0 & 1  & 0  & 0  & 0   & 0  \
  \\
  \text{\footnotesize\textcolor{gray}{$\delta'$}} & \ 
      0 & 0  & 0  & 0  &  t-1 & 0   \
  \\
  \text{\footnotesize\textcolor{gray}{$\zeta'$}} & \
      0 &  0 & 0  & t^{-1}-1 &  0 &   0 \
  \\
  \text{\footnotesize\textcolor{gray}{$\eta'$}} & \
      1 & 0   & 0   & 0  &    0 & 0  \ \smallskip
  \\
\end{block}
\end{blockarray}
 \]
 
To obtain the equivariant intersection form $Q$ for $\tilde X$ itself, we surger the 2-spheres corresponding to $\alpha$ and $\beta$ (and their translates) in $\tilde X_0$. Observe that the classes $t^k \alpha$ and $t^k \beta$ are algebraically dual to $t^k \eta'$ and $t^k \gamma'$, respectively, and that these respective duals are the only classes that pair nontrivially with $t^k \alpha$ and $t^k \beta$. It follows that $H_2(\tilde X; \Lambda)$ is generated by the remaining classes corresponding to $\delta'$ and $\zeta'$, and that the equivariant intersection form $\lambda_X$ is presented by the matrix
$$Q = \begin{bmatrix} 0 & t-1\,  \\ \, t^{-1} -1 & 0 \end{bmatrix}.$$

Next we consider $F_1'$. Set $X'=B^4 \setminus\mathring{N}(F_1')$ and let $\smash{\widetilde X}'$ denote its infinite cyclic cover. Recall that $F_1$ and $F_1'$ are obtained by attaching an identical set of bands to the original underlying disks $D$ and $D'$, and that $D$ and $D'$ differ by a 180$^\circ$ rotation through a vertical axis. Therefore, up to isotopy, we can obtain $F_1'$ from $D$ by swapping the two bands that comprise $F_1' \setminus \mathring{D}'$ (i.e., the two bands in part (a) of Figure~\ref{fig:enlarged}) and attaching these to $D$. It follows that the handle diagrams for $X$ and $\smash{\widetilde X}$ in Figures~\ref{fig:exterior}-\ref{fig:linking-form} can be converted to handle diagrams for $X'$ and $\smash{\widetilde X}'$ by swapping the contents of the orange and green boxes.  At the level of linking forms, this merely changes the self-linking terms for $\zeta$ and $\eta$. 

To apply this, we consider the analogous auxiliary 4-manifold $X_0'$ obtained by surgering the 1-handles $\alpha$ and $\beta$ in $X'$. Its equivariant intersection form is obtained from $Q_0$ by swapping the last two diagonal entries (namely, $0$ and $t^{-1}-2+t$).  In this case, we change basis (without involving the pair of 2-handles that will be turned into 1-handles) using the matrix
$$P'=  \left[
\begin{array}{cccccc}
 1 & 0 & 0 & 0 & 0 & 0 \\
 0 & 1 & 0 & 0 & 0 & 0 \\
 0 & 0 & -1 & 1-t & 0 & 0 \\
 0 & 0 & 0 & -1 & 0 & 0 \\
 0 & 0 & 0 & 2 t+2t^{-1}-5 & 1 & 0 \\
 0 & 0 & t^{-1}-1 & 2-t & 0 & t^{-1}\\
\end{array}
\right].$$

This produces a basis in which the equivariant intersection form  is presented by the following matrix (which was computed with the aid of Mathematica \cite{mathematica}): \smallskip
$$\left[
\begin{array}{cccccc}
 0 & 0 & 0 & 0 & 0 & 1 \\
 0 & 0 & 1 & 0 & 0 & 0 \\
 0 & 1 & -t^2+4 t-6+\frac{4}{t}-\frac{1}{t^2} & -\frac{1}{t^3}+\frac{5}{t^2}-2 t+7-\frac{9}{t} & 0 & -t^2+3 t-3+\frac{1}{t} \\
 0 & 0 & -t^3+5 t^2-9 t+7-\frac{2}{t} & 0 & t-1 & -t^3+4 t^2-5 t+2 \\
 0 & 0 & 0 & \frac{1}{t}-1 & 0 & 0 \\
 1 & 0 & -3+t+\frac{3}{t}-\frac{1}{t^2} &2 -\frac{1}{t^3}+\frac{4}{t^2}-\frac{5}{t} & 0 & t-2+\frac{1}{t} \\
\end{array}
\right]$$
\smallskip

After surgering the spheres corresponding to the first two basis elements, we obtained 1-handles that are algebraically dual to the third and sixth basis elements. The remaining two basis elements generate $H_2(\tilde X'; \Lambda)$, which thus has equivariant intersection form
\smallskip
$$Q' = \begin{bmatrix} 0 & t-1\,  \\ \, t^{-1} -1 & 0 \end{bmatrix}.$$

Using Theorem~\ref{thm:cp}, we conclude that $F_1$ and $F_1'$ are topologically isotopic rel boundary.
\end{proof}

\smallskip

\section{Further applications and examples}

\smallskip

\subsection{Building candidate 4-manifolds in the closed setting}
One feature of the ``interlocking'' construction from \S\ref{sec:intro} is that it yields 4-manifolds with explicit and well-controlled handle diagrams, which are convenient for using these pairs of 4-manifolds with boundary to produce exotic 4-manifolds in the closed setting.

\begin{figure}[b]

\center
\def\svgwidth{.425\linewidth}
\begingroup%
  \makeatletter%
  \providecommand\color[2][]{%
    \errmessage{(Inkscape) Color is used for the text in Inkscape, but the package 'color.sty' is not loaded}%
    \renewcommand\color[2][]{}%
  }%
  \providecommand\transparent[1]{%
    \errmessage{(Inkscape) Transparency is used (non-zero) for the text in Inkscape, but the package 'transparent.sty' is not loaded}%
    \renewcommand\transparent[1]{}%
  }%
  \providecommand\rotatebox[2]{#2}%
  \newcommand*\fsize{\dimexpr\f@size pt\relax}%
  \newcommand*\lineheight[1]{\fontsize{\fsize}{#1\fsize}\selectfont}%
  \ifx\svgwidth\undefined%
    \setlength{\unitlength}{558.57897084bp}%
    \ifx\svgscale\undefined%
      \relax%
    \else%
      \setlength{\unitlength}{\unitlength * \real{\svgscale}}%
    \fi%
  \else%
    \setlength{\unitlength}{\svgwidth}%
  \fi%
  \global\let\svgwidth\undefined%
  \global\let\svgscale\undefined%
  \makeatother%
  \begin{picture}(1,0.74358121)%
    \lineheight{1}%
    \setlength\tabcolsep{0pt}%
    \put(0,0){\includegraphics[width=\unitlength,page=1]{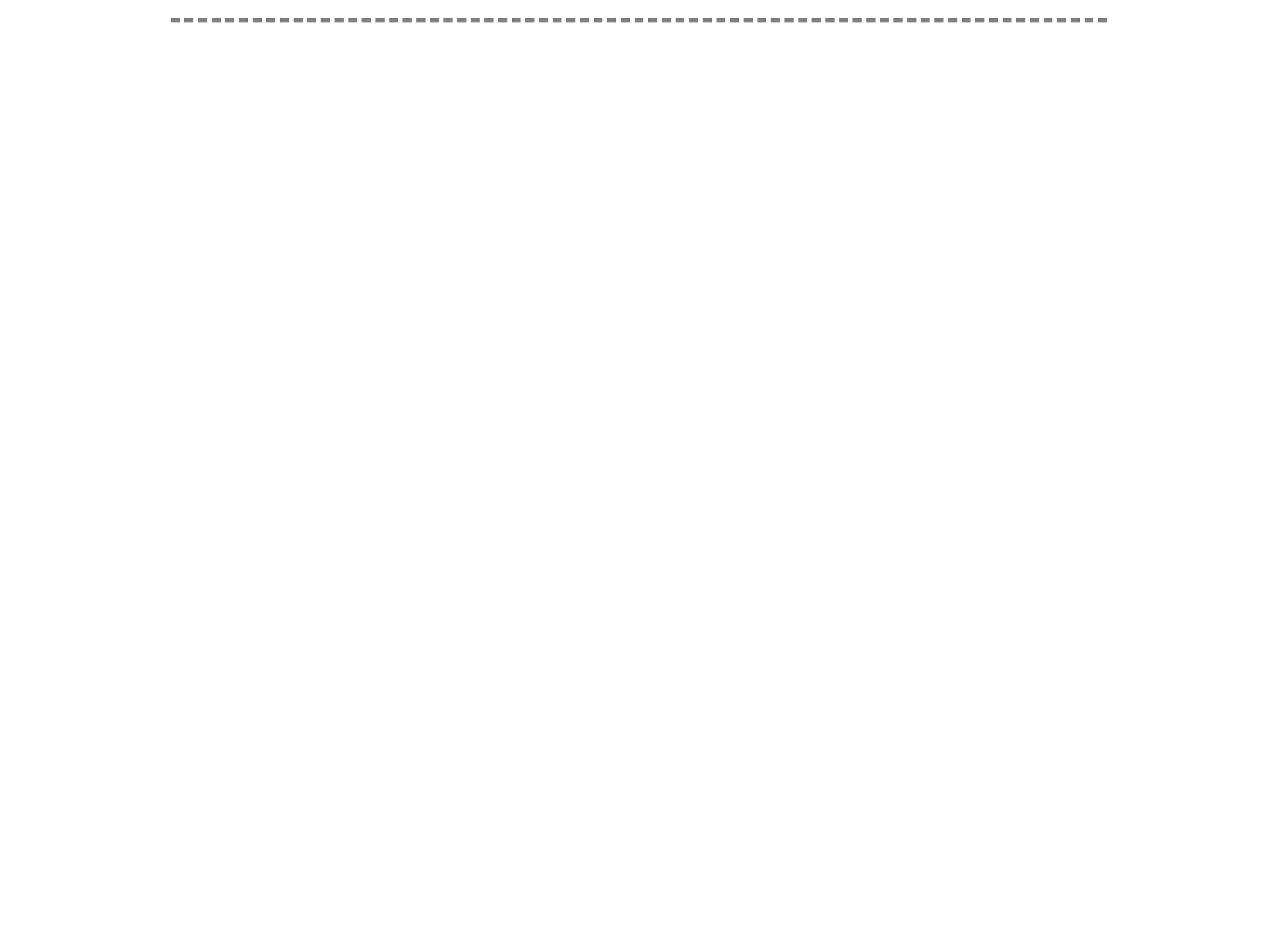}}%
    \put(0.78473796,0.31174249){\color[rgb]{0.24705882,0.40392157,0.6627451}\makebox(0,0)[lt]{\lineheight{1.25}\smash{\begin{tabular}[t]{l}{\footnotesize $0$}\end{tabular}}}}%
    \put(0.19275431,0.40582814){\color[rgb]{0.77647059,0.42352941,0.42352941}\makebox(0,0)[lt]{\lineheight{1.25}\smash{\begin{tabular}[t]{l}{\footnotesize $0$}\end{tabular}}}}%
    \put(0,0){\includegraphics[width=\unitlength,page=2]{hooked-positron-stein.pdf}}%
    \put(0.20055909,0.75432069){\color[rgb]{0.4,0.4,0.4}\makebox(0,0)[lt]{\lineheight{1.25}\smash{\begin{tabular}[t]{l}{\scriptsize $\gamma$}\end{tabular}}}}%
    \put(0,0){\includegraphics[width=\unitlength,page=3]{hooked-positron-stein.pdf}}%
    \put(0.14959823,0.33247324){\color[rgb]{0.4,0.4,0.4}\makebox(0,0)[lt]{\lineheight{1.25}\smash{\begin{tabular}[t]{l}{\scriptsize $\tilde\tau(\gamma)$}\end{tabular}}}}%
  \end{picture}%
\endgroup%
 \hspace{65pt} \raisebox{4pt}{\includegraphics[width=.18\linewidth]{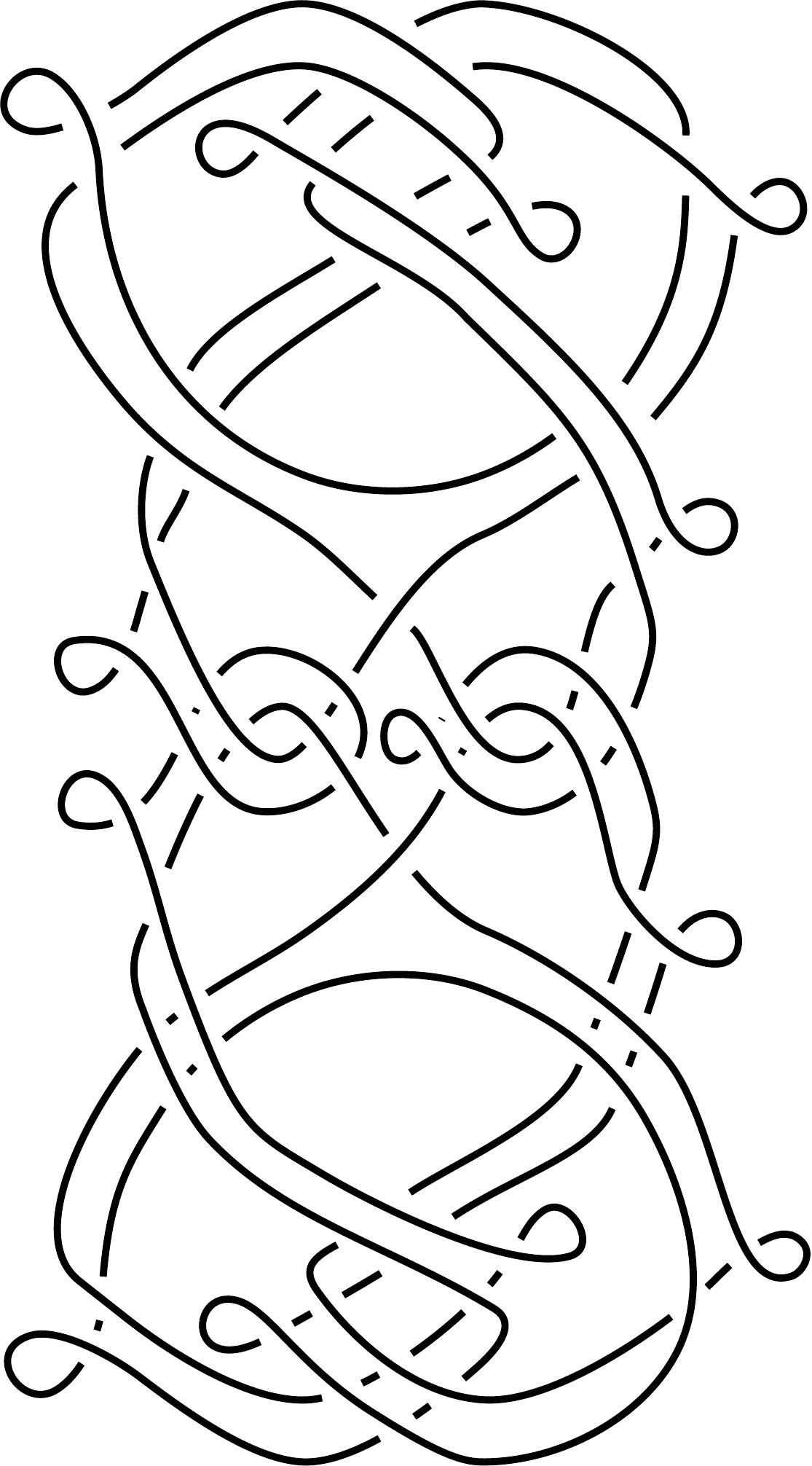}}

\vspace{-3.5pt}

\captionsetup{width=.85\linewidth}

\caption{On the left, a Stein handle diagram for $W=\Sigma_2(B^4,D)$. The  curves $\gamma$ and $\tilde\tau(\gamma)$ are decorations that illustrate the effect of the self-diffeomorphism $\tilde \tau$ on $\partial W$.  \ On the right, a transverse representative $\mathcal{K}$ of the knot $K$ from Example~\ref{ex:disk}.}\label{fig:positron-stein}

\vspace{-22.5pt}

\end{figure}

\vspace{-7.5pt}

\begin{proof}[Proof of Theorem~\ref{thm:embed}]
The left side of  Figure~\ref{fig:positron-stein} depicts a Stein handle diagram for $W$, decorated with cures $\gamma$ and $\tilde\tau(\gamma)$ illustrating the action of $\tilde \tau$ on $\partial W$.  The Stein 4-manifold $W_+$ in Figure~\ref{fig:stein-embed} is obtained from $W$ by attaching two additional Stein 2-handles, shown in green.  (The interested reader can verify that this 4-manifold is the double branched cover of $B^4$ along $F_1$, but we will not need this fact.)  The 4-manifold $W_+'$ on the righthand side of Figure~\ref{fig:stein-embed} is diffeomorphic to the result of cutting out $\mathring{W} \subset W_+$ and regluing it via $\tilde \tau$. To make this more precise, let $L \subset \partial W$ denote the framed link corresponding to the attaching curves of the two additional Stein 2-handles seen on the left side of Figure~\ref{fig:stein-embed}. The union of the 1-handles and the gray 2-handles on the righthand side of Figure~\ref{fig:stein-embed} is naturally identified with $W$, and the green attaching curves in that diagram represent the framed link $\tilde \tau(L) \subset \partial W$. After surgering $W$ along $L$ and $\tilde\tau(L)$, respectively, we obtain $\partial W_+$ and $\partial W_+'$. The diffeomorphism $\tilde \tau$ thus induces a diffeomorphism $f: \partial W_+ \to \partial W_+'$.  

We make two   claims about this map:
\vspace{-2pt}
\begin{enumerate}[label=(\roman*)]
\item the diffeomorphism  $f$ extends to a homeomorphism between $W_+$ and $W_+'$; and
\vspace{-2pt}

\item the diffeomorphism $f$ does not extend to a diffeomorphism between $W_+$ and $W_+'$.
\end{enumerate}

\begin{figure}
\vspace{-10pt}
\center
\def\svgwidth{.85\linewidth}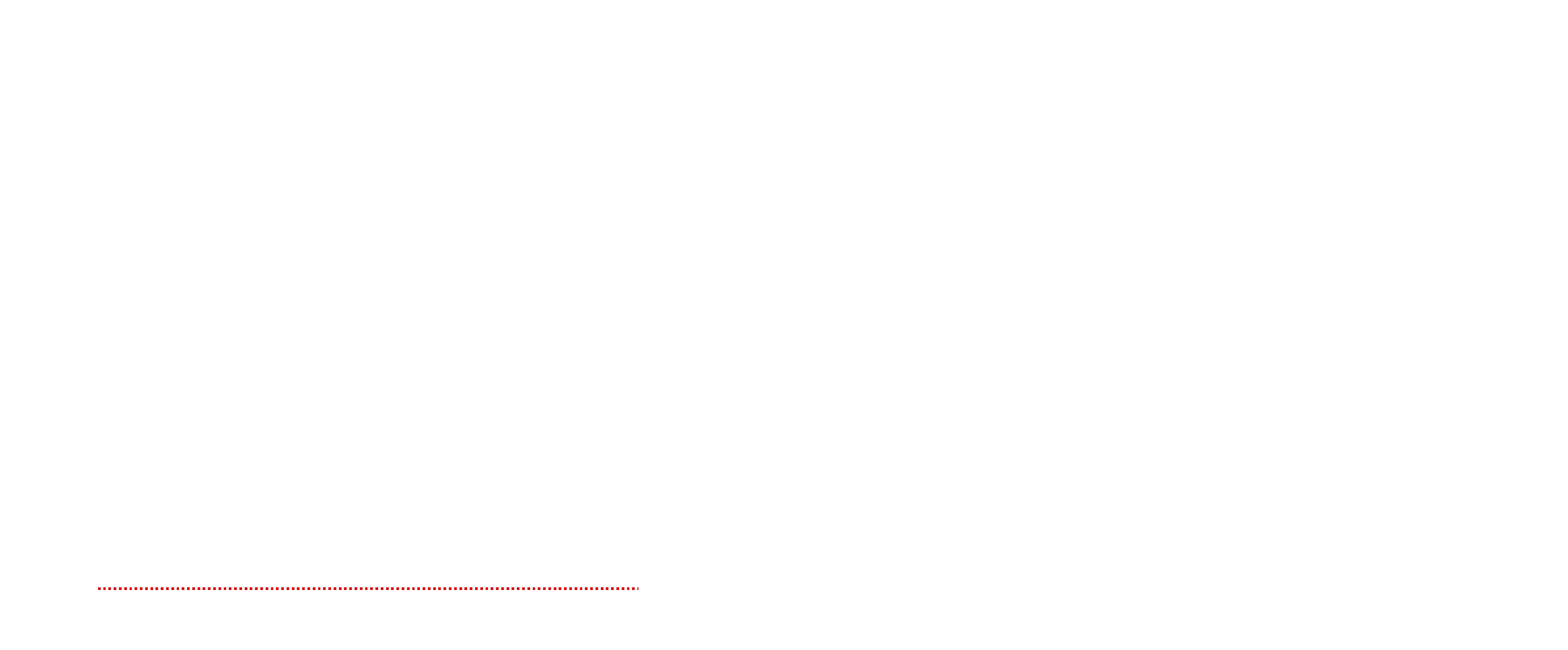 
\caption{Stein handle diagrams, decorated with curves $\gamma$ and $f(\gamma)$.}\label{fig:stein-embed}
\vspace{-5pt}
\end{figure}
\medskip

We begin with (i).  It is straightforward to check that the 3-manifold $\partial W_+ \cong \partial W_+'$ is an integer homology sphere (by hand or using \textsl{SnapPy} \cite{snappy}) and that, with respect to suitable bases, the intersection forms of $W_+$ and $W_+'$ are  given by the standard hyperbolic form $\left[ \begin{smallmatrix} 0 & 1 \\ 1 & 0 \end{smallmatrix}\right]$. 
Therefore, by \cite{freedman} (cf~\cite[\S1]{boyer}), any homeomorphism from $\partial W_+$ to $\partial W_+'$ extends to a homeomorphism from $W_+$ to $W_+'$. Applying this to $f$ yields the first claim.

To prove (ii), we apply the strategy from \cite{akbulut-matveyev}: Consider the 4-manifold obtained by attaching a $(-1)$-framed 2-handle to $W_+$ along the curve $\gamma \subset \partial W_+$ shown in Figure~\ref{fig:stein-embed}. Note that this can be realized as a Stein 2-handle attachment, hence this enlarged 4-manifold admits a Stein structure. On the other hand, $f (\gamma) \subset \partial W_+'$ bounds a smoothly embedded disk in $W_+'$. It follows that attaching a $(-1)$-framed 2-handle along $f(\gamma)$ yields a 4-manifold that contains a smooth 2-sphere of square $-1$. By \cite{lisca-matic:embed}, a Stein domain cannot contain such a sphere, so we conclude that $f$ does not extend to a diffeomorphism from $W_+$ to $W_+'$, proving the second claim.

To prove the claim in the theorem, we again consider the Stein 4-manifold obtained from $W_+$ by attaching a $(-1)$-framed 2-handle along $\gamma \subset \partial W_+$. By \cite{lisca-matic:embed}, this 4-manifold embeds into a minimal complex surface $Z$. Cutting out $W_+ \subset Z$ and gluing in $W_+'$ via $f$ yields a 4-manifold $Z'$ that is homeomorphic to $Z$ by claim (i) above. However, by the argument used to prove claim (ii),  $Z'$ contains a smoothly embedded 2-sphere of square $-1$, hence it cannot be diffeomorphic to $Z$. Furthermore, note that cutting out $W_+$ and gluing in $W_+'$ via $f$ is equivalent to cutting out the underlying copy of $W$ inside $W_+  \subset Z$ and regluing it by $\tilde \tau$. 
\end{proof}

\vspace{-4pt}

With an eye towards invariants from the involutive setting, we ask the following:

\vspace{-3pt}

\begin{question}
Can the closed 4-manifolds in the preceding proof be taken to be spin?
\end{question}

\vspace{2pt}

\subsection{Another pair of disks.} The knot $9_{46}$ bounds a well-known pair of slice disks that induce distinct maps on Khovanov homology \cite{sundberg-swann} (cf \cite{hayden-sundberg}), though the disks themselves are not even topologically isotopic.  Similar to the example from  Figure~\ref{fig:hooked-disks}, we  ``hook'' together two copies of $m(9_{46})$ (and simplify the resulting diagram) to produce the knot shown in Figure~\ref{fig:from946}(a). It bounds a natural pair of slice disks, one of which is shown in part (b) of Figure~\ref{fig:from946}, and other disk is obtained by applying the rotation depicted in part (a) of the figure. The branched cover is shown in Figure~\ref{fig:hooked-946-dbc}.

These two disks induce distinct maps on Khovanov homology (though we note that they are also not even topologically isotopic). Part (c) of Figure~\ref{fig:from946} depicts a class that is easily verified to lie in the support of the map induced by the disk shown and in the kernel of the map induced by the second disk. Just as in the proof of Proposition~\ref{prop:hooked-disks}, these disks induce distinct maps on Bar-Natan homology even after an internal stabilization. In this case, the relevant part of the Bar-Natan--Lee--Turner spectral sequence is shown in Table~\ref{table:946_SS} (computed using \textsl{KnotJob} \cite{knotjob}). \clearpage

\begin{figure}

\vspace{-11.5pt}
\center
     \labellist

\pinlabel \textcolor{gray}{\tiny $x$} at 750 297
\pinlabel \textcolor{gray}{\tiny $x$} at 750 270
\pinlabel \textcolor{gray}{\tiny $x$} at 750 243
\pinlabel \textcolor{gray}{\tiny $x$} at 750 216

\pinlabel \textcolor{gray}{\tiny $x$} at 991 135
\pinlabel \textcolor{gray}{\tiny $x$} at 991 108
\pinlabel \textcolor{gray}{\tiny $x$} at 991 81
\pinlabel \textcolor{gray}{\tiny $x$} at 991 54

\pinlabel \textcolor{gray}{\tiny $x$} at 937 260
\pinlabel \textcolor{gray}{\tiny $x$} at 810 100
\pinlabel \textcolor{gray}{\tiny $x$} at 754 15


\pinlabel (a) at  138 -33
\pinlabel (b) at  510 -33
\pinlabel (c) at 872 -33
\endlabellist

  \includegraphics[width=\linewidth]{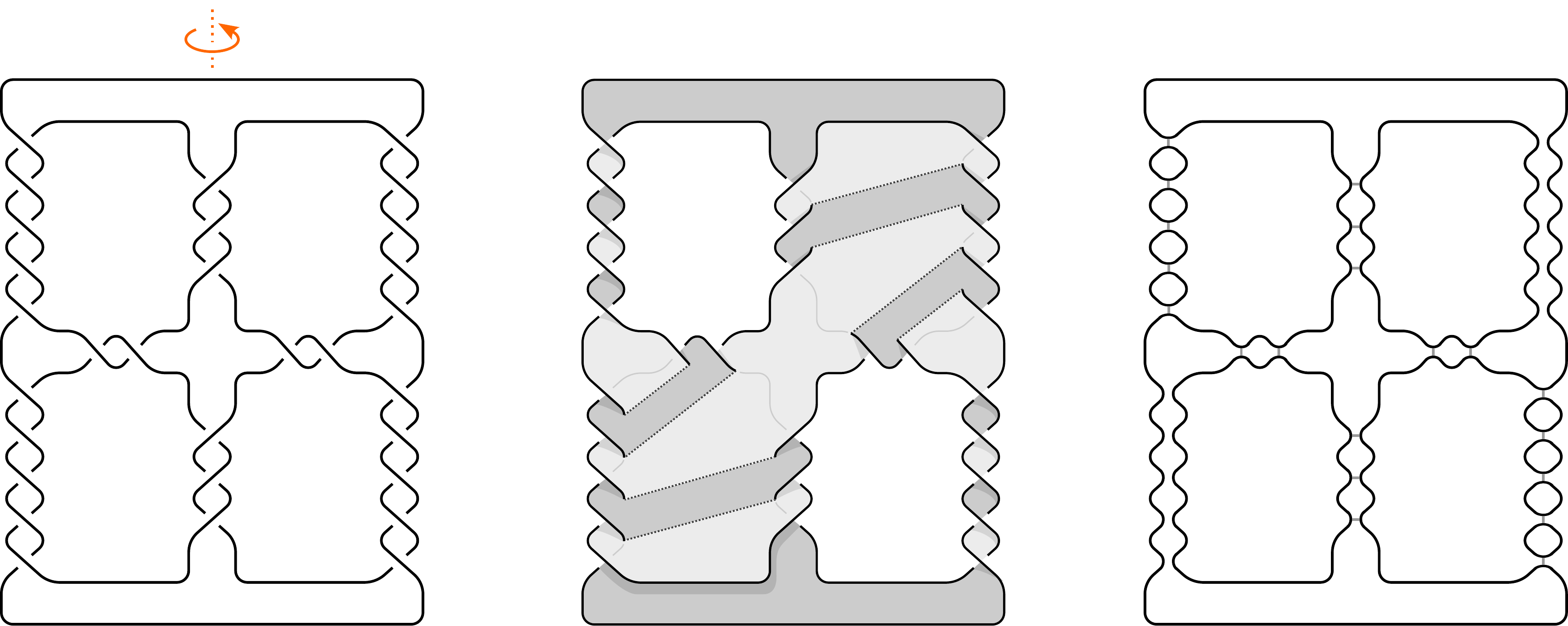}

\bigskip
\medskip

\captionsetup{width=.875\linewidth}

\caption{(a) A knot arising from two copies of $m(9_{46})$. (b) A slice disk bounded by the knot.  (c) An element of Khovanov homology distinguishing the two slice disks.}

\medskip

\label{fig:from946}
\end{figure}

\begin{figure}


\bigskip
\center
\def\svgwidth{.975\linewidth}
\begingroup%
  \makeatletter%
  \providecommand\color[2][]{%
    \errmessage{(Inkscape) Color is used for the text in Inkscape, but the package 'color.sty' is not loaded}%
    \renewcommand\color[2][]{}%
  }%
  \providecommand\transparent[1]{%
    \errmessage{(Inkscape) Transparency is used (non-zero) for the text in Inkscape, but the package 'transparent.sty' is not loaded}%
    \renewcommand\transparent[1]{}%
  }%
  \providecommand\rotatebox[2]{#2}%
  \newcommand*\fsize{\dimexpr\f@size pt\relax}%
  \newcommand*\lineheight[1]{\fontsize{\fsize}{#1\fsize}\selectfont}%
  \ifx\svgwidth\undefined%
    \setlength{\unitlength}{932.8942775bp}%
    \ifx\svgscale\undefined%
      \relax%
    \else%
      \setlength{\unitlength}{\unitlength * \real{\svgscale}}%
    \fi%
  \else%
    \setlength{\unitlength}{\svgwidth}%
  \fi%
  \global\let\svgwidth\undefined%
  \global\let\svgscale\undefined%
  \makeatother%
  \begin{picture}(1,0.33879225)%
    \lineheight{1}%
    \setlength\tabcolsep{0pt}%
    \put(0,0){\includegraphics[width=\unitlength,page=1]{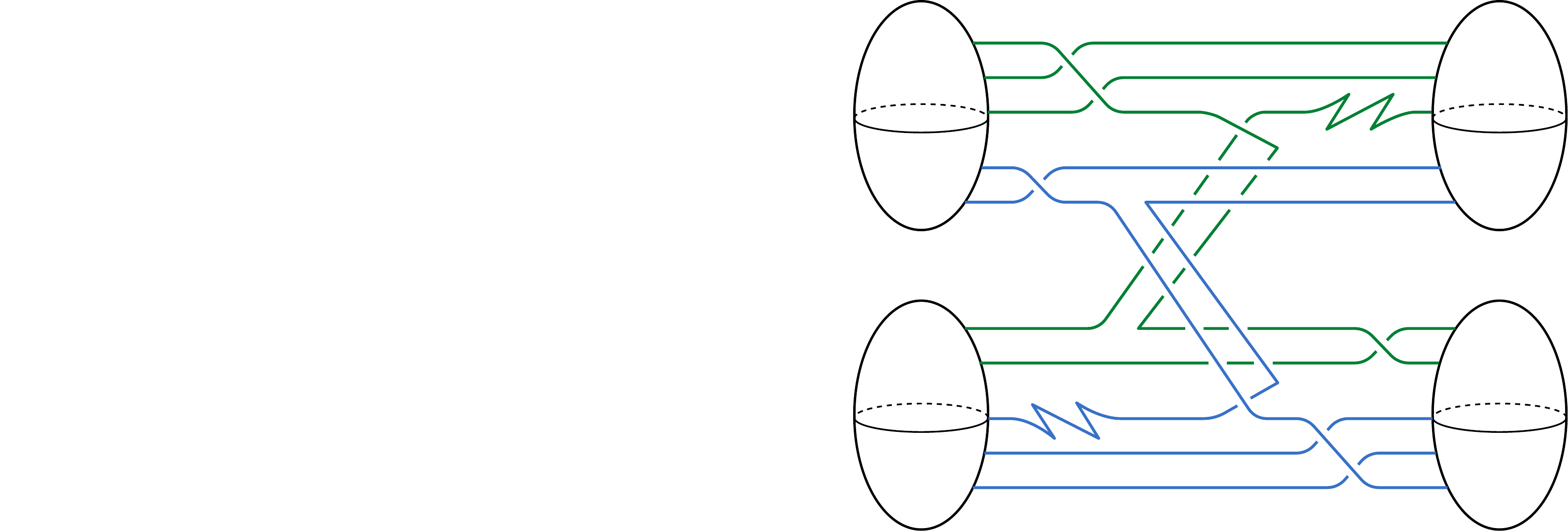}}%
    \put(0.47232742,0.24650254){\color[rgb]{0.21568627,0.44313725,0.78431373}\makebox(0,0)[lt]{\lineheight{1.25}\smash{\begin{tabular}[t]{l}{\small$0$}\end{tabular}}}}%
    \put(0.01967513,0.07401023){\color[rgb]{0,0.50196078,0.2}\makebox(0,0)[lt]{\lineheight{1.25}\smash{\begin{tabular}[t]{l}{\small$0$}\end{tabular}}}}%
    \put(0.63061654,0.14304204){\color[rgb]{0,0.50196078,0.2}\makebox(0,0)[lt]{\lineheight{1.25}\smash{\begin{tabular}[t]{l}{\small $0$}\end{tabular}}}}%
    \put(0.89751456,0.17856576){\color[rgb]{0.21568627,0.44313725,0.78431373}\makebox(0,0)[lt]{\lineheight{1.25}\smash{\begin{tabular}[t]{l}{\small $0$}\end{tabular}}}}%
    \put(0,0){\includegraphics[width=\unitlength,page=2]{hooked946dbchoriz.pdf}}%
  \end{picture}%
\endgroup%

\captionsetup{width=\linewidth}
\caption{The double branched cover of $B^4$ along the disk from Figure~\ref{fig:from946}. The first diagram exhibits two symmetries of its boundary, and the second diagram  exhibits a Stein  structure.  Although it is \emph{not} contractible, the reader can verify that this 4-manifold may still be used to produce exotic pairs of larger 4-manifolds (as in the proof of Theorem~\ref{thm:embed}).
}\label{fig:hooked-946-dbc}

\end{figure}

\begin{table}[b]\tiny
\medskip
\centering
\setlength\extrarowheight{2pt}
\begin{tabular}{|c||cc|>{\centering}m{.02\textwidth}|>{\centering}m{.02\textwidth}|>{\centering}m{.02\textwidth}|>{\centering}m{.02\textwidth}|>{\centering}m{.02\textwidth}|>{\centering}m{.02\textwidth}|c|}
 \arrayrulecolor{black}
\multicolumn{10}{c}{Page 1} \\
\cline{1-10}  
\color{black}\backslashbox{\!$q$\!}{\!$h$\!} &  $\hdots$   &\hspace{-8pt} {\color{black}{\vrule}} \hspace{-2pt} \color{black}$-7$ \hspace{-5pt} & \color{black}$-6$ & \color{black}$-5$ & \color{black}$-4$ & \color{black}$-3$ & \color{black}$-2$ & \color{black}$-1$ & \color{black}$0$ \\
\hhline{=||=========}
$0$     &   &   &   &   &   &   &   &   & \cellcolor{cellgray} $2$ \\ 
\hhline{-||~--------}
$-2$     &   &   &   &   &   &   &   &   &   \\
\hhline{-||~--------}
$-4$     &   &   &   &   &   &   & $2$ & $1$ &   \\
\hhline{-||~--------}
$-6$     &   &   &   &   &   & $4$ &   &   &   \\
\hhline{-||~--------}
$-8$     &   &   &   &   & $9$ & $1$ &   &   &   \\
\hhline{-||~--------}
$-10$  &   &   &   & $17$ & $1$ &   &   &   &   \\
\hhline{-||~--------}
$-12$     &   &   & $21$ &   &   &   &   &   &   \\
\hhline{-||~--------}
$-14$    &   & $27$ & $1$ &   &   &   &   &   &   \\
\hhline{-||}
$\vdots$ & \  \reflectbox{$\ddots$}
\\
\end{tabular}\hfill
\begin{tabular}{|c||cc|>{\centering}m{.02\textwidth}|>{\centering}m{.02\textwidth}|>{\centering}m{.02\textwidth}|>{\centering}m{.02\textwidth}|>{\centering}m{.02\textwidth}|>{\centering}m{.02\textwidth}|c|}
\multicolumn{10}{c}{}\\
\multicolumn{10}{c}{Page 2} \\
\cline{1-10}  
\color{black}\backslashbox{\!$q$\!}{\!$h$\!} &  $\hdots$   &\hspace{-8pt} {\color{black}{\vrule}} \hspace{-2pt} \color{black}$-7$ \hspace{-5pt} & \color{black}$-6$ & \color{black}$-5$ & \color{black}$-4$ & \color{black}$-3$ & \color{black}$-2$ & \color{black}$-1$ & \color{black}$0$ \\
\hhline{=||=========}
$0$     &   &   &   &   &   &   &   &   & \cellcolor{cellgray} $2$ \\
\hhline{-||~--------}
$-2$     &   &   &   &   &   &   &   &   &   \\
\hhline{-||~--------}
$-4$     &   &   &   &   &   &   &   & $1$ &   \\
\hhline{-||~--------}
$-6$     &   &   &   &   &   &   &   &   &   \\
\hhline{-||~--------}
$-8$     &   &   &   &   &   &   &   &   &   \\
\hhline{-||~--------}
$-10$     &   &   &   & $1$ &   &   &   &   &   \\
\hhline{-||~--------}
$-12$     &   &   &   &   &   &   &   &   &   \\
\hhline{-||~--------}
$-14$     &   &   & $1$ &   &   &   &   &   &   \\
\hhline{-||}
$\vdots$ & \ \reflectbox{$\ddots$} \\
\multicolumn{10}{c}{}\\
\end{tabular}

\vspace{-10pt}

\captionsetup{width=.96\linewidth}
\caption{The first two pages of the reduced Bar-Natan--Lee--Turner spectral sequence for the knot $K$ from Figure~\ref{fig:from946}, shown for $h \geq -7$ and $q \geq -14$.}
\label{table:946_SS}

\vspace{-10pt}

\end{table}

\newgeometry{margin=1.1in,headsep=.3in, top=1.1in, bottom=1.1in,right=1.1in}

\fancyhfoffset[LE,RO]{0cm}
\fancyhfoffset[LO,RE]{-.33cm}

  \setlength{\bibsep}{4pt plus 0.3ex}

{\small \footnotesize \bibliographystyle{alphamod}
\bibliography{biblio}}

\end{document}